\documentclass[12pt]{article}

\usepackage{comment}
\usepackage{pgf,tikz}\usetikzlibrary{arrows}
\usepackage{tikzscale}
\usepackage{pgfplots}
\usepackage{cite}
\pgfplotsset{compat=1.8}
\usepackage{fancyhdr}
\usepackage{float}
\usepackage{framed}
\usepackage{amsthm}
\usepackage{amsmath}
\usepackage{amsfonts,dsfont}
\usepackage{amssymb}
\usepackage{color}
\usepackage[colorlinks=true, linkcolor=blue, urlcolor=blue, citecolor=black]{hyperref}
\usepackage[all,cmtip]{xy}
\usepackage[small,compact]{titlesec}

\usepackage{geometry}
\geometry{hmargin=1cm,vmargin=2cm}

\newcommand{\md}{\mathrm{d}}

\newcommand{\J}{{\rm J}}
\newcommand{\loc}{\text{loc}}

\newcommand{\pp}[2]{\frac{\partial #1}{\partial #2}}

\theoremstyle{definition}\newtheorem{definition}{Definition}
\theoremstyle{definition}\newtheorem{proposition}[definition]{Proposition}
\theoremstyle{definition}\newtheorem{remark}[definition]{Remark}
\theoremstyle{definition}\newtheorem{theorem}[definition]{Theorem}
\theoremstyle{definition}
\theoremstyle{definition}
\theoremstyle{definition}\newtheorem{example}[definition]{Example}
\theoremstyle{definition}
\theoremstyle{definition}
\theoremstyle{definition}

\makeatletter
\renewcommand*\env@matrix[1][c]{\hskip -\arraycolsep
  \let\@ifnextchar\new@ifnextchar
  \array{*\c@MaxMatrixCols #1}}
\makeatother		

\makeatletter
\renewcommand*\env@matrix[1][*\c@MaxMatrixCols c]{%
  \hskip -\arraycolsep
  \let\@ifnextchar\new@ifnextchar
  \array{#1}}
\makeatother

\allowdisplaybreaks[4]

\begin{document}

\thispagestyle{fancy}
\fancyhead{}
\fancyfoot{}
\renewcommand{\headrulewidth}{0pt}
\cfoot{\thepage}
\rfoot{\today}

\begin{center}
\begin{Large}
Symmetry-preserving finite element schemes:\\  An introductory investigation
\end{Large}

\vskip 0.5cm

\begin{tabular*}{0.9\textwidth}{@{\extracolsep{\fill}} ll}
Alexander Bihlo
& Francis Valiquette\\
Department of Mathematics and Statistics & Department of Mathematics\\
Memorial University of Newfoundland& SUNY at New Paltz \\
St. John's, NL, Canada\quad A1C 5S7& New Paltz, NY, USA \quad 12561\\
{\tt abihlo@mun.ca} & {\tt valiquef@newpaltz.edu} \\
{\tt http://www.math.mun.ca/$\sim$abihlo} &{\tt http://www2.newpaltz.edu/$\sim$valiquef}
\end{tabular*}
\end{center}

\hspace{0.4cm}\parbox{0.9\textwidth}{
\vskip 0.25cm\noindent
{\bf Keywords}:  Finite elements, geometric numerical integration, invariant discretization, ordinary differential equations, moving frames.
\vskip 0.25cm\noindent
{\bf Mathematics subject classification}:   34C14, 65L60
}

\vskip 0.5cm

\abstract{}
Using the method of equivariant moving frames, we present a procedure for constructing symmetry-preserving finite element methods for second-order ordinary differential equations. Using the method of lines, we then indicate how our constructions can be extended to (1+1)-dimensional evolutionary partial differential equations, using Burgers' equation as an example.
Numerical simulations verify that the symmetry-preserving finite element schemes constructed converge at the expected rate and that these schemes can yield better results than their non-invariant finite element counterparts.

\section{Introduction}

Geometric numerical integration is a branch of numerical analysis dedicated to the construction of numerical schemes that preserve intrinsic geometric properties of the differential equations being approximated, \cite{HLW-2006}.  Standard examples include symplectic integrators, \cite{BC-2016,HLW-2006,LR-2004,SC-1994}, Lie--Poisson structure preserving schemes, \cite{ZM-1988}, energy-preserving methods, \cite{QM-2008}, and general conservative methods, \cite{WBN-2016,WBN-2017}. The motivation for considering structure-preserving numerical schemes is that, as a rule of thumb, these integrators provide better global or long term results than their traditional non-geometric counterparts.

In engineering, physics, mathematics, and other mathematical sciences, most  differential equations of interest admit a group of symmetries that encapsulates properties of the equations and their solution spaces.  Over the last 30 years, there has been a considerable amount of work dedicated to the development of finite difference numerical methods that preserve the Lie point symmetries of differential equations, \cite{BDK-1997,BD-2001,D-1989,DW-2000,HLW-2000}. For ordinary differential equations, symmetry-preserving numerical schemes have shown to be very effective, especially when solutions exhibit sharp variations or admit singularities, \cite{BCW-2006,BRW-2008,CRW-2016,KO-2004}. For partial differential equations, the numerical improvements are not as clear and more work remains to be done, \cite{BCW-2015,K-2008,LMW-2015,RV-2013}. For evolutionary partial differential equations, symmetry-preserving schemes generally require the use of time-evolving meshes which can lead to mesh tangling and other numerical instabilities. To avoid these mesh singularities, various methods have been proposed in recent years, including evolution--projection techniques, invariant $r$-adaptive methods, and invariant meshless discretizations, \cite{B-2013,BN-2013,BN-2014,BP-2012}.

To this day, research on symmetry-preserving numerical schemes has solely focused on finite difference methods. Extending the methodology of symmetry-preserving schemes to other numerical integration techniques such as finite volumes, finite elements, or spectral methods remains to be done.  As such, in this paper we lay out basic ideas for constructing symmetry-preserving finite element methods.  

From a numerical perspective, finite element methods offer several advantages over finite difference methods.  For example, when dealing with complex domains, unstructured grids, or moving boundaries, finite element methods are generally easier to implement than finite difference methods. Also,  the finite element method relies on discretizing a weak form of a system of differential equations and thus has less rigid smoothness requirements than methodologies that rely on the discretization of the strong form of the system. As a result, finite element methods are extensively employed in computational fluid dynamics, structural mechanics, and many other branches of engineering, physics, and applied mathematics.  

As a first attempt to systematically construct symmetry-preserving finite element methods, we restrict our considerations to second-order ordinary differential equations.  As basis functions, we consider piecewise linear (Lagrangian) functions (also called hat functions).  For more accurate schemes, our constructions can easily be applied to higher order Lagrangian interpolants. The ideas developed in this paper can also be applied to higher order ordinary differential equations, with appropriate interpolating functions.  Adapting our results to hierarchical bases, splines and Hermite basis functions, and partial differential equations with multi-dimensional basis functions remains to be considered.

The remainder of this paper is laid out as follows.  In Section \ref{problem section} we state the problem we aim to solve in this paper.  Namely, we show via two examples that, in general, the discrete weak formulation of a differential equation will not preserve the symmetries of the original differential equation.  To remedy this situation, we explain how to construct symmetry-preserving finite element schemes using the method of equivariant moving frames.  The basic moving frame constructions, adapted to the problem at hand, are introduced in Section \ref{moving frame section}.  The main results of this paper are found in Section~\ref{symmetry-preserving FEM section}, where we provide an algorithm for constructing symmetry-preserving finite element schemes.   In Section \ref{ODE section}, our constructions are illustrated with several examples of ordinary differential equations. Numerical results are presented that verify the convergence of the proposed invariant finite element schemes and show that symmetry-preserving finite element schemes can provide better numerical results than their non-invariant counterparts.  In Section \ref{PDE section} we explain how to adapt the constructions introduced for ordinary differential equations to (1+1)-dimensional evolutionary partial differential equations using the method of lines. This is illustrated using Burgers' equation as an example. Finally, in Section \ref{section conclusion} we summarize our findings and give some directions for future research.

\section{Statement of the Problem}\label{problem section}

Let $x \in \mathbb{R}$ be the independent variable, and $u=u(x)$ a real-valued scalar function. In the following we consider single second-order ordinary differential equations written in the form
\begin{equation}\label{ODE}
u_{xx} = \Delta(x,u,u_x).
\end{equation}
Here and in what follows, we are using the index notation for derivatives, i.e.\ 
\[
u_x=\frac{\md u}{\md x}\qquad \text{and}\qquad u_{xx}=\frac{\md^2 u}{\md x^2}.
\] 
Now, let $G$ be an $r$-parameter Lie group acting locally on the plane $\mathbb{R}^2$ parametrized by $(x,u)$.  Using capital letters to denote the transformed variables, we have 
\begin{equation}\label{G action}
X = g\cdot x\qquad\text{and}\qquad U = g\cdot u,\quad \text{where}\quad g\in G.
\end{equation}
The group action \eqref{G action} induces a \emph{prolonged action} on the derivatives given by the chain rule:
\[
U_X = \frac{\mathrm{D}_x(U)}{\mathrm{D}_x(X)},\qquad U_{XX} = \frac{\mathrm{D}_x(U_X)}{\mathrm{D}_x (X)},
\]
where
\[
\mathrm{D}_x = \pp{}{x} + u_x\pp{}{u} + u_{xx} \pp{}{u_x} + \cdots
\]
is the total derivative operator with respect to the independent variable $x$.

\begin{definition}
A local Lie group of transformations $G$ acting on an open subset of $\mathbb{R}^2$ is said to be a \emph{symmetry group} of the differential equation \eqref{ODE} if the solution space of the equation in invariant under the given group action.  In other words,
\begin{equation}\label{symmetry definition}
U_{XX} = \Delta(X,U,U_X)\qquad \text{whenever}\qquad u_{xx} = \Delta(x,u,u_x).
\end{equation}
\end{definition}

The main goal of this paper consists of recasting \eqref{ODE} into its weak form, and to introduce a systematic procedure for constructing a discrete approximation of the weak form that will preserve the symmetries of the original differential equation. To achieve this goal, we introduce the space of real-valued locally integrable functions on $\mathbb{R}$,
\[
L_{1,\loc}(\mathbb{R}) = \Big\{f\colon \mathbb{R}\to \mathbb{R}\;\Big|\; \int_K |f|\, \mathrm{d}x < \infty \text{ for all compact subsets } K \subset \mathbb{R}\Big\}.
\]
Alternatively, $L_{1,\loc}(\mathbb{R})$ is defined as the set of functions $f\colon\mathbb{R}\to \mathbb{R}$ such that for any compactly supported test function $\phi \in C_c^\infty(\mathbb{R})$, the integral
\[
\int_{-\infty}^\infty |f\phi|\, \mathrm{d}x < \infty
\]
is finite.  We now assume that solutions to \eqref{ODE} and their derivatives are in $L_{1,\loc}(\mathbb{R})$.  Multiplying the differential equation \eqref{ODE} by a test function $\phi \in C_c^\infty(\mathbb{R})$, and integrating over $\mathbb{R}$, we obtain, using integration by parts, the weak formulation of equation \eqref{ODE}:
\begin{equation}\label{ODE weak form}
0 = \int_\infty^\infty [-u_{xx} + \Delta(x,u,u_x)]\phi\, \mathrm{d}x = \int_{-\infty}^\infty [u_x\phi_x + \Delta(x,u,u_x)\phi]\, \mathrm{d}x.
\end{equation}

Let $G$ be the symmetry group of the differential equation \eqref{ODE}.  The group $G$ acts on the test function $\phi$ via the usual group action on functions:
\[
\Phi = g\cdot \phi = \phi(g\cdot x) = \phi(X),\qquad g\in G.
\]
The induced action on $\phi_x$ is given by the chain rule
\[
\Phi_X = \frac{\phi_x}{\mathrm{D}_x X}.
\]
The group $G$ also acts on the differential $\mathrm{d}x$, \cite{FO-1999}.  The action is given by
\begin{equation}\label{dx transformation}
\omega = g\cdot \mathrm{d}x =  (\mathrm{D}_xX)\, \mathrm{d}x.
\end{equation}

Restricting our attention to local Lie group actions, \cite{O-1993}, we assume that $g\in G$ is near the identity element so that the bounds of integration in \eqref{ODE weak form} remain infinite once an element of the symmetry group acts on the weak form.  The following theorem is essential for our consideration of invariant finite element discretizations.

\begin{theorem}
The invariance of the differential equation \eqref{ODE} implies the invariance of the weak form \eqref{ODE weak form}. 
\end{theorem}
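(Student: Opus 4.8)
The plan is to carry the weak form \eqref{ODE weak form} through the group action, reduce the transformed integral back to the strong residual, and there invoke the symmetry hypothesis \eqref{symmetry definition}. First I would transform each ingredient of \eqref{ODE weak form} according to the rules already recorded: $u_x\mapsto U_X$, $\phi\mapsto\Phi$, $\phi_x\mapsto\Phi_X=\phi_x/\mathrm{D}_xX$, $\Delta(x,u,u_x)\mapsto\Delta(X,U,U_X)$, and $\md x\mapsto\omega=(\mathrm{D}_xX)\,\md x$, producing the transformed weak form
\begin{equation*}
\int_{-\infty}^\infty\big[U_X\Phi_X+\Delta(X,U,U_X)\,\Phi\big]\,\omega .
\end{equation*}
Since $\omega=\md X$ by \eqref{dx transformation}, this is genuinely an integral in the new independent variable $X$.

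Next I would reverse the integration by parts in the $X$ variable. Because $g$ is taken near the identity, the map $x\mapsto X$ is a local diffeomorphism and $\Phi$ inherits the compact support of $\phi$, so the boundary contributions vanish and $\int U_X\Phi_X\,\md X=-\int U_{XX}\,\Phi\,\md X$. The transformed weak form therefore collapses to
\begin{equation*}
\int_{-\infty}^\infty\big[-U_{XX}+\Delta(X,U,U_X)\big]\,\Phi\,\md X ,
\end{equation*}
which is precisely the weak form of \eqref{ODE} written in the transformed variables.

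At this point the hypothesis does the work. By the fundamental lemma of the calculus of variations, the displayed integral vanishes for every admissible $\Phi$ if and only if $U_{XX}=\Delta(X,U,U_X)$; and by the symmetry assumption \eqref{symmetry definition} this holds exactly when $u_{xx}=\Delta(x,u,u_x)$, i.e.\ exactly when the original weak form \eqref{ODE weak form} vanishes for every $\phi$. Because the near-identity transformation sets up a bijection between admissible test functions $\phi$ and $\Phi$, the two weak formulations are satisfied on the same set of functions $u$, which is the asserted invariance.

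I expect the genuine subtlety to be conceptual rather than computational. A naive term-by-term comparison of the transformed integrand with the original does not produce an equality: pulling everything back to $x$ shows that the residual one-form $(-u_{xx}+\Delta)\,\phi\,\md x$ only transforms up to a nonvanishing multiplier (the Jacobian-type factor $(U_uX_x-U_xX_u)/(\mathrm{D}_xX)^2$ built from the point transformation $X=X(x,u)$, $U=U(x,u)$, which is generally not $1$). The point to get right, then, is that ``invariance of the weak form'' must mean invariance of the associated equation, i.e.\ of its solution set, and that the nonzero multiplier is harmless for this purpose; routing the argument through the strong residual via the reverse integration by parts is exactly what lets the multiplier drop out. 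The only technical care needed beyond this is the vanishing of the boundary terms, which the compact support of $\Phi$ and the near-identity hypothesis guarantee.
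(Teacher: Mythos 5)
Your proposal is correct and follows essentially the same route as the paper's own proof: transform the weak form, use $\omega=\mathrm{d}X$ to undo the integration by parts in the new variable, and invoke \eqref{symmetry definition} to make the strong residual vanish. Your additional remarks on the nonvanishing multiplier and the role of the fundamental lemma are sound elaborations of what the paper leaves implicit.
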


\begin{proof}
Indeed, 
\begin{align*}
\int_{-\infty}^\infty [U_X\Phi_X + \Delta(X,U,U_X)\Phi]\, \omega 
 = \int_{-\infty}^\infty [-U_{XX} + \Delta(X,U,U_X)]\Phi\, \omega 
 = \int_{-\infty}^\infty 0\cdot \Phi\, \omega = 0,
\end{align*}
where \eqref{symmetry definition} was used.
\end{proof}

This theorem is essential as it guarantees that for a given differential equation with symmetry group $G$, the Lie group $G$ remains a symmetry group of its corresponding weak form.  Therefore, when seeking to construct a symmetry-preserving numerical scheme for a particular differential equation, one can either start with the original strong form or work with a suitable weak form. The strong form of a differential equation is the starting point for constructing symmetry-preserving finite difference schemes, which is the route that has been taken so far in the literature, \cite{BDK-1997,B-2013,BCW-2015,BN-2013,BN-2014,BP-2012,BV-2017,BCW-2006,BRW-2008,BD-2001,CRW-2016,D-1989,DKW-2000,DW-2000,HLW-2000,K-2008,KO-2004,LMW-2015,O-2001,RV-2013}. On the other hand, the weak form is the starting point for constructing symmetry-preserving finite element schemes, which is the focus of the present paper.

To approximate \eqref{ODE weak form}, we subdivide the real line $\mathbb{R}$ into the \emph{elements} $[x_n, x_{n+1}]$. For an introduction to the theory of finite elements, we refer the reader to \cite{BS-2007}.  In this paper, the space of test functions $C_c^\infty(\mathbb{R})$ is replaced by the space of hat functions
\[
H^\md = \{ \phi_k\colon\mathbb{R} \to \mathbb{R}\;|\; k\in \mathbb{Z}\},
\]
where\\[0.25cm]
\begin{minipage}{0.45\linewidth}
\begin{center}
\begin{tikzpicture}[scale = 0.75]
\begin{axis}[
 axis lines=middle,
 xmin=-0.1,xmax=2.5,
 xtick={0.25,1.25,2.25},xticklabels={$x_{k-1}$,$x_k$,$x_{k+1}$},
 ymin=-0.1,ymax=1.1,
 ytick={1},yticklabels={1},
 samples=200]
\addplot[domain=0.25:1.25,thick] {x-0.25};
\addplot[domain=1.25:2.25,thick] {2.25-x};
\end{axis}
\node at (3.75,5.5) {$\phi_k(x)$};
\end{tikzpicture}
\end{center}
\end{minipage}\hfill
\begin{minipage}{0.55\linewidth}
\begin{equation}\label{hat functions}
\phi_k(x) = \begin{cases}
\cfrac{x-x_{k-1}}{x_k - x_{k-1}} & x \in [x_{k-1},x_k] \\
\cfrac{x_{k+1} - x}{x_{k+1} - x_k} & x \in [x_k,x_{k+1}] \\
\hskip 0.75cm 0 & x \notin [x_{k-1},x_{k+1}]
\end{cases}.
\end{equation}
\end{minipage}
The solution $u(x)$ to the weak formulation \eqref{ODE weak form} is now approximated by the (infinite) linear combination 
\begin{equation}\label{ud}
u(x) \approx u^\md(x) = \sum_{k=-\infty}^\infty u_k \phi_k(x),
\end{equation}
where $u_k = u(x_k)$ denotes the value of the function $u(x)$ at the node $x_k$.  A first order approximation of the weak form \eqref{ODE weak form} is then given by
\begin{equation}\label{discrete weak form}
0 = \int_{-\infty}^\infty [u_x^\md \phi^\prime_k + \Delta(x,u^\md,u^\md_x)\phi_k]\,\mathrm{d}x,
\end{equation}
where $\phi_k^\prime = \mathrm{D}_x(\phi_k)$ denotes the derivative of $\phi_k$, and 
\[
u_x^\md(x) = \sum_{k=-\infty}^\infty u_k \phi_k^\prime(x)
\]
approximates the first derivative $u_x$.

The transformation group \eqref{G action} induces an action on the discrete weak form \eqref{discrete weak form}.  The action on the nodes $x_k$ and the coefficients $u_k$ in the expansion \eqref{ud} is given by the product action
\[
X_k = g\cdot x_k,\qquad U_k = g\cdot u_k,\qquad k\in \mathbb{Z},
\]
and the action on the hat function $\phi_k(x)$ is
\[
\Phi_k = g\cdot\phi_k = \phi_k(X) = \begin{cases}
\cfrac{X-X_{k-1}}{X_k - X_{k-1}} & X \in [X_{k-1},X_k] \\
\cfrac{X_{k+1} - X}{X_{k+1} - X_k} & X \in [X_k,X_{k+1}] \\
\hskip 0.85cm 0 & X \notin [X_{k-1},X_{k+1}]
\end{cases}.
\]
We then introduce the transformed interpolated function
\begin{equation}\label{Ud}
U^\md(X):= \sum_{k=-\infty}^\infty U_k\Phi_k(X).
\end{equation}
In the subsequent developments, we require the transformed approximation \eqref{Ud} to be of the same form as the original interpolant \eqref{ud}.  In other words, we require $U^\md$ to be a linear combination of basis functions that depend solely on the independent variable $x$.  This can be achieved by requiring that $\Phi_k(X)$ is a function of $x$ only and not of $u$.  To do so, we require the group action to be \emph{projectable}, \cite{O-1993}. This assumption requires the transformation rule in the independent variable to be a function of $x$ (and the group parameters):
\[
X = g\cdot x = X(x,g),\qquad U=g\cdot u = U(x,u,g).
\]

The reason for requiring the group action to be projectable comes from the fact that if this were not the case, then the transformed hat function $\Phi_k$ would depend on the unknown function $u(x)$, which would make it impossible to evaluate the integral in \eqref{discrete weak form}, and therefore make it impossible to obtain the corresponding finite element scheme.  Investigating the possibility of extending the constructions to general, non-projectable group actions remains to be done.  We do stress here though that most symmetry groups of differential equations arising as models in the mathematical sciences are indeed projectable, and thus the projectability assumption captures essentially all equations of practical relevance.

With this in mind, we now recall a theorem due to Lie, \cite{HA-1975,L-1880}.

\begin{theorem}
The largest Lie group contained in the diffeomorphism pseudo-group of the real line $\mathcal{D}(\mathbb{R})$ is the special linear group $SL(2,\mathbb{R})$.  Up to a local diffeomorphism, the action of $SL(2,\mathbb{R})$ on the real line $\mathbb{R}$ is given by fractional linear transformations:
\begin{equation}\label{SL2 action}
X = \frac{\alpha x + \beta}{\gamma x + \delta},\qquad \alpha\delta - \beta \gamma = 1.
\end{equation}
\end{theorem}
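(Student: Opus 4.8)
The strategy is to translate the problem into the language of infinitesimal generators and to classify finite-dimensional Lie algebras of vector fields on the line. A one-parameter subgroup of $\mathcal{D}(\mathbb{R})$ is the flow of a vector field $\mathbf{v}=\xi(x)\,\partial_x$, so a genuine $r$-parameter Lie group sitting inside $\mathcal{D}(\mathbb{R})$ corresponds to an $r$-dimensional Lie subalgebra $\mathfrak{g}$ of the Lie algebra of all such vector fields, with bracket
\[
[\xi_1\,\partial_x,\ \xi_2\,\partial_x]=(\xi_1\xi_2'-\xi_2\xi_1')\,\partial_x.
\]
It therefore suffices to show that any finite-dimensional $\mathfrak{g}$ has dimension at most three, and that, up to a local change of coordinate, the three-dimensional one is spanned by $\partial_x$, $x\,\partial_x$, and $x^2\,\partial_x$. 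I may assume the action is transitive, so some generator is nonvanishing at a chosen point; by the flow-box (straightening) theorem I can pick a coordinate in which this generator is $\partial_x$, so that $\partial_x\in\mathfrak{g}$.

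First I would record the consequences of $\partial_x\in\mathfrak{g}$. Bracketing a general element against $\partial_x$ gives
\[
[\partial_x,\ \xi\,\partial_x]=\xi'\,\partial_x,
\]
so the coefficient space $V=\{\xi:\xi\,\partial_x\in\mathfrak{g}\}$ is a finite-dimensional space of functions containing the constant $1$ and invariant under $D=\mathrm{d}/\mathrm{d}x$. Closure of $\mathfrak{g}$ under the full bracket means, in addition, that $V$ is closed under the Wronskian pairing $(\xi_1,\xi_2)\mapsto\xi_1\xi_2'-\xi_2\xi_1'$.

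The core of the proof, and the step I expect to be the main obstacle, is to squeeze $\dim V\le 3$ out of these two closure properties. Being finite-dimensional and $D$-invariant, $V$ is the solution space of a constant-coefficient linear ODE and is therefore spanned by exponential-polynomials $x^k e^{\lambda x}$; write $\Lambda$ for the finite set of exponents that occur. Feeding the top terms of two such summands into the Wronskian bracket shows that the sum of two distinct exponents again lies in $\Lambda$, and that a repeated exponent carrying a genuine polynomial factor forces its double into $\Lambda$; since $\Lambda$ is finite and contains $0$, a short computation pins it down to one of the symmetric configurations $\{0\}$, $\{-\lambda,0,\lambda\}$, or $\{0,\pm i\beta\}$, each with bounded polynomial degrees. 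In every case $\dim V\le 3$, the extremal realizations being $\langle 1,x,x^2\rangle$, $\langle 1,e^{\lambda x},e^{-\lambda x}\rangle$, and $\langle 1,\cos\beta x,\sin\beta x\rangle$; the purely polynomial case is confirmed directly, since applying the Wronskian bracket to the two top monomials of $\langle 1,x,\dots,x^{m-1}\rangle$ produces a term of degree $2m-4$, and closure forces $2m-4\le m-1$, i.e.\ $m\le 3$.

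It remains to identify the three-dimensional algebra and integrate it. Taking the representative $V=\langle 1,x,x^2\rangle$, the generators $\mathbf{v}_1=\partial_x$, $\mathbf{v}_2=x\,\partial_x$, $\mathbf{v}_3=x^2\,\partial_x$ satisfy
\[
[\mathbf{v}_1,\mathbf{v}_2]=\mathbf{v}_1,\qquad [\mathbf{v}_1,\mathbf{v}_3]=2\mathbf{v}_2,\qquad [\mathbf{v}_2,\mathbf{v}_3]=\mathbf{v}_3,
\]
which are, after a linear change of basis, exactly the structure relations of $\mathfrak{sl}(2,\mathbb{R})$. Integrating the flows — translations $x\mapsto x+t$ from $\mathbf{v}_1$, scalings $x\mapsto e^t x$ from $\mathbf{v}_2$, and $x\mapsto x/(1-tx)$ from $\mathbf{v}_3$ — and composing them yields precisely the fractional linear maps $X=(\alpha x+\beta)/(\gamma x+\delta)$ with $\alpha\delta-\beta\gamma=1$, which is the claimed action \eqref{SL2 action}. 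The one genuinely delicate point is the passage from the abstract Lie algebra to the stated \emph{group}: the other extremal realizations correspond to different real forms (the compact realization $\langle 1,\cos\beta x,\sin\beta x\rangle$ does not integrate to a global group of diffeomorphisms of $\mathbb{R}$), so some care is needed to argue that, up to a local diffeomorphism, the projective realization is the relevant one.
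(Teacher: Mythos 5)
The paper does not prove this theorem at all: it is stated as a recalled classical result of Lie, with citations to Lie's 1880 paper and its English translation, so there is no in-paper argument to compare yours against. Your sketch is the standard infinitesimal proof of Lie's classification of finite-dimensional transformation groups of the line, and its skeleton is sound: straightening a nonvanishing generator to $\partial_x$, observing that the coefficient space $V$ is finite-dimensional, $D$-invariant (hence a space of exponential--polynomials) and closed under the Wronskian pairing, and then bounding $\dim V\le 3$ by tracking how exponents and polynomial degrees grow under that pairing. The purely polynomial degree count ($2m-4\le m-1$) is correct, and the integration of $\langle\partial_x,\,x\partial_x,\,x^2\partial_x\rangle$ to the fractional linear action is routine. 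Two points deserve tightening. First, the exponent bookkeeping is stated impressionistically; to make it airtight you should argue explicitly that if $\Lambda$ contained two $\mathbb{Q}$-independent (or even just suitably positioned) nonzero exponents, repeated Wronskians would generate infinitely many exponents, contradicting finiteness. Second, your closing worry about ``different real forms'' is misplaced: all three extremal realizations $\langle 1,x,x^2\rangle$, $\langle 1,e^{\lambda x},e^{-\lambda x}\rangle$, and $\langle 1,\cos\beta x,\sin\beta x\rangle$ are copies of $\mathfrak{sl}(2,\mathbb{R})$ (the trigonometric one is just the projective action on $\mathbb{RP}^1$ read in an angular coordinate; $\mathfrak{su}(2)$ cannot act effectively on a one-manifold since it has no two-dimensional subalgebra to serve as an isotropy), and the local diffeomorphisms $y=e^{\lambda x}$ and $y=\tan(\beta x/2)$ conjugate them to the polynomial realization, which is exactly the ``up to a local diffeomorphism'' clause of the statement. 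With those two repairs your argument is a complete and self-contained proof of a result the paper only cites.
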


\begin{proposition}
Under the fractional linear transformation \eqref{SL2 action} the hat function $\phi_k(x)$ transforms according to the formula
\begin{subequations}\label{basis functions transformation}
\begin{equation}\label{Phi_k}
\Phi_k(X) = \phi_k(x) \cdot \frac{\gamma x_k + \delta}{\gamma x + \delta},
\end{equation}
while the transformation rule for the first derivative is 
\begin{equation}\label{Phi_k-prime}
\Phi_k^\prime(X) = (\gamma x_k+\delta)[(\gamma x+\delta) \phi_k^\prime(x) - \gamma \phi_k(x)],
\end{equation}
where $\phi_k(x)$ is differentiable.
\end{subequations}
\end{proposition}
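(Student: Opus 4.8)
The plan is to derive both transformation formulas by direct computation from the fractional linear action \eqref{SL2 action}, exploiting the fact that the hat functions \eqref{hat functions} are built from ratios of differences of nodal coordinates. The key observation is that $SL(2,\mathbb{R})$ acts projectably, so $\Phi_k(X)$ is a genuine function of $x$ alone, and each node transforms as $X_j = (\alpha x_j + \beta)/(\gamma x_j + \delta)$. I would first record the elementary identity for a difference of transformed coordinates: using $\alpha\delta - \beta\gamma = 1$, a short calculation gives
\begin{equation*}
X_j - X_i = \frac{\alpha x_j + \beta}{\gamma x_j + \delta} - \frac{\alpha x_i + \beta}{\gamma x_i + \delta} = \frac{x_j - x_i}{(\gamma x_j + \delta)(\gamma x_i + \delta)}.
\end{equation*}
This single factorization is the engine of the whole proof.

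To prove \eqref{Phi_k}, I would work branch by branch on the definition of $\phi_k$. On the interval $X \in [X_{k-1}, X_k]$, I substitute the difference identity into both numerator $X - X_{k-1}$ and denominator $X_k - X_{k-1}$. The product $(\gamma x + \delta)(\gamma x_{k-1} + \delta)$ appearing in the numerator difference and the product $(\gamma x_k + \delta)(\gamma x_{k-1} + \delta)$ in the denominator difference share the common factor $(\gamma x_{k-1} + \delta)$, which cancels. What remains rearranges exactly into $\phi_k(x)$ times $(\gamma x_k + \delta)/(\gamma x + \delta)$. The second branch $X \in [X_k, X_{k+1}]$ works identically with the common factor $(\gamma x_{k+1} + \delta)$ cancelling, and the zero branch is immediate since the action is order-preserving near the identity so that $X \in [X_{k-1}, X_{k+1}]$ corresponds to $x \in [x_{k-1}, x_{k+1}]$. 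This confirms that a single closed-form multiplier works across both nonzero branches.

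For \eqref{Phi_k-prime}, the cleanest route is to differentiate the prolonged action relation rather than the piecewise formula directly. Since $\Phi_k$ is regarded as a function of $X$, I would use $\Phi_k'(X) = \mathrm{D}_x(\Phi_k)/\mathrm{D}_x(X)$ together with $\mathrm{D}_x(X) = (\gamma x + \delta)^{-2}$, which follows by differentiating \eqref{SL2 action} and applying $\alpha\delta - \beta\gamma = 1$. Applying $\mathrm{D}_x$ to the right-hand side of \eqref{Phi_k} via the product rule gives
\begin{equation*}
\mathrm{D}_x\!\left(\phi_k(x)\,\frac{\gamma x_k + \delta}{\gamma x + \delta}\right) = (\gamma x_k + \delta)\left[\frac{\phi_k'(x)}{\gamma x + \delta} - \frac{\gamma\,\phi_k(x)}{(\gamma x + \delta)^2}\right],
\end{equation*}
and dividing by $\mathrm{D}_x(X) = (\gamma x + \delta)^{-2}$ multiplies through by $(\gamma x + \delta)^2$, collapsing the expression precisely to $(\gamma x_k + \delta)[(\gamma x + \delta)\phi_k'(x) - \gamma\,\phi_k(x)]$, as claimed.

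I expect the only genuine subtlety to be bookkeeping at the two levels where $\phi_k$ is non-differentiable, namely the node $x_k$ and the breakpoints $x_{k\pm 1}$; this is why the statement explicitly restricts \eqref{Phi_k-prime} to points where $\phi_k(x)$ is differentiable, so the formula is understood to hold on the open subintervals and the isolated kink points are excluded. A secondary point worth verifying, to justify the branch-by-branch matching, is that for group elements near the identity the transformation $x \mapsto X$ is monotonic on the relevant interval, so that the ordering $X_{k-1} < X_k < X_{k+1}$ is preserved and each branch of $\Phi_k$ in the variable $X$ corresponds to the same-indexed branch of $\phi_k$ in $x$; this is guaranteed by the local-action assumption already invoked in the excerpt. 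Everything else is the routine algebra driven by the difference identity above.
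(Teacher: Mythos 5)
Your proposal is correct and follows essentially the same route as the paper: substitute the fractional linear action into the piecewise definition of $\phi_k$ (your difference identity $X_j - X_i = (x_j-x_i)/[(\gamma x_j+\delta)(\gamma x_i+\delta)]$ is precisely the algebra the paper leaves implicit), and then obtain \eqref{Phi_k-prime} via the chain rule $\Phi_k'(X) = \mathrm{D}_x(\Phi_k)/\mathrm{D}_x(X)$ with $\mathrm{D}_x(X) = (\gamma x+\delta)^{-2}$. The only difference is that you spell out the branch-by-branch cancellation and the monotonicity caveat explicitly, which the paper's one-line proof omits.
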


\begin{proof}
Formula \eqref{Phi_k} is obtained by substituting \eqref{SL2 action} into the definition of the hat function $\phi_k(x)$ in \eqref{hat functions}.  As for \eqref{Phi_k-prime}, the chain rule yields
\begin{align*}
\Phi_k^\prime(X) &= \frac{1}{\mathrm{D}_x X} \mathrm{D}_x[\Phi_k(X)] = (\gamma x+ \delta)^2\mathrm{D}_x\bigg[\phi_k(x)\cdot \frac{\gamma x_k + \delta}{\gamma x + \delta}\bigg]\\
&= (\gamma x_k+\delta)[(\gamma x+\delta) \phi_k^\prime(x) - \gamma \phi_k(x)].
\end{align*}
\end{proof}

Under the action \eqref{SL2 action}, formula \eqref{dx transformation}  for the induced action on the differential $\mathrm{d}x$ becomes
\begin{equation}\label{SL2 dx transformation}
\omega = (\mathrm{D}_xX)\, \mathrm{d}x = \frac{\mathrm{d}x}{(\gamma x + \delta)^2}.
\end{equation}

Knowing how each constituent of the discrete weak form \eqref{discrete weak form} transforms under the action of the Lie group $G$, we can now address the main purpose of the paper.  Given a second-order ordinary differential equation with symmetry group $G$ and weak form~\eqref{ODE weak form},  we seek to construct, in a systematic fashion, a weak form approximation that will remain invariant under the symmetry group of the differential equation.  In general, the naive discretization \eqref{discrete weak form} will not preserve all the symmetries of the continuous problem.  To construct a symmetry-preserving discrete weak form we will use the method of equivariant moving frames, \cite{FO-1999,M-2010,MM-2016}, which is endowed with an \emph{invariantization map} that can be used to map non-invariant quantities to their invariant counterparts.  In our case, we will use the invariantization map to invariantize the discrete weak form~\eqref{discrete weak form}, resulting in a symmetry-preserving finite element scheme.

\begin{example}\label{non-invariance linear ODE example}
As a simple example, consider the second-order linear ordinary differential equation
\begin{equation}\label{linear ODE}
u_{xx} + p(x)u_x+q(x)u=f(x),
\end{equation}
where $p$, $q$, and $f$ are arbitrary smooth functions of their argument. Equation \eqref{linear ODE} admits a two-parameter symmetry group given by
\begin{equation}\label{linear symmetry}
X=x,\qquad U = u+\epsilon_1\alpha(x) + \epsilon_2\gamma(x),
\end{equation}
where $\alpha(x)$ and $\gamma(x)$ are two linearly independent solutions of the homogeneous equation $u_{xx} + p(x)u_x+q(x)u=0$.  The corresponding weak form of~\eqref{linear ODE} is
%
\[
\int_{-\infty}^\infty [-u_x\phi_x + (p(x)u_x+q(x)u-f(x))\phi]\, \mathrm{d}x = 0,
\]
%
while an approximation to this weak form is given by
\begin{equation}\label{linear ODE discrete weak form}
\int_{-\infty}^\infty [-u_x^\md \phi_k^\prime + (p(x) u_x^\md + q(x)u^\md - f(x))\phi_k]\, \mathrm{d}x = 0.
\end{equation}
Acting on the latter with the group action \eqref{linear symmetry}, we obtain
\begin{equation}\label{non-invariance}
\begin{aligned}
0 &= \int_{-\infty}^\infty [-u_x^\md \phi_k^\prime + (p(x) u_x^\md + q(x)u^\md - f(x))\phi_k]\, \mathrm{d}x \\
&\hskip 1cm + \epsilon_1 \int_{-\infty}^\infty [-\alpha_x^\md \phi_k^\prime + (p(x) \alpha_x^\md + q(x)\alpha^\md)\phi_k]\,\mathrm{d}x \\
&\hskip 1cm + \epsilon_2 \int_{-\infty}^\infty [-\gamma_x^\md \phi_k^\prime + (p(x) \gamma_x^\md + q(x)\gamma^\md)\phi_k]\, \mathrm{d}x,
\end{aligned}
\end{equation}
where
\[
\alpha^\md = \sum_{k=-\infty}^\infty \alpha_k \phi_k(x),\qquad \alpha^\md_x = \sum_{k=-\infty}^\infty \alpha_k \phi_k^\prime(x),\qquad \alpha_k = \alpha(x_k),
\]
and similarly for $\gamma^\md$ and $\gamma^\md_x$.  Since the last two integrals in \eqref{non-invariance} are, in general, nonzero, the discrete weak form \eqref{linear ODE discrete weak form} does not admit the superposition principle given by \eqref{linear symmetry}.
\end{example}

\begin{example}
As a less trivial example, consider the  second-order nonlinear ordinary differential equation
\begin{equation}\label{nonlinear ODE}
u_{xx} = \frac{1}{u^3}.
\end{equation}
This equation is invariant under the group action
\begin{equation}\label{symmetry nonlinear ODE}
X = \frac{\alpha x + \beta}{\gamma x + \delta},\qquad U = \frac{u}{\gamma x + \delta},\qquad \alpha \delta - \beta \gamma = 1,
\end{equation}
and a weak formulation of \eqref{nonlinear ODE} is given by
\begin{equation}\label{nonlinear ode weak form}
0 = \int_{-\infty}^\infty \bigg[u_x \phi_x + \frac{1}{u^3}\phi\bigg]\, \mathrm{d}x.
\end{equation}
Approximating $u^{-3}$ by
\[
\frac{1}{u^3} \approx \sum_{k=-\infty}^\infty \frac{1}{u_k^3}\phi_k(x),
\]
we obtain the discrete weak form
\begin{equation}\label{discrete nonlinear ODE weak form}
0 = \int_{-\infty}^\infty \bigg[\sum_{\ell = -\infty}^\infty \bigg(u_\ell \phi_\ell^\prime \phi_k^\prime + \frac{1}{u_\ell^3}\phi_\ell \phi_k\bigg)\bigg]\, \mathrm{d}x.
\end{equation}
Acting on \eqref{discrete nonlinear ODE weak form} with the symmetry group \eqref{symmetry nonlinear ODE}, recalling \eqref{basis functions transformation} and \eqref{SL2 dx transformation}, we obtain, after simplification,
\begin{equation}\label{non-invariance discrete weak form}
0=\int_{-\infty}^\infty \bigg[\sum_{\ell = -\infty}^\infty\bigg( u_\ell \phi_\ell^\prime \phi_k^\prime + \frac{1}{u_\ell^3}\bigg(\frac{\gamma x_\ell + \delta}{\gamma x + \delta}\bigg)^4 \phi_\ell \phi_k\bigg)\bigg]\, \mathrm{d}x.
\end{equation}
The extra factor $\bigg(\cfrac{\gamma x_\ell + \delta}{\gamma x + \delta}\bigg)^4$ in the second term of \eqref{non-invariance discrete weak form} shows that the discrete weak form \eqref{discrete nonlinear ODE weak form} is not invariant under the group action \eqref{symmetry nonlinear ODE}.
\end{example}

We conclude this section by observing that all our considerations can be restricted to boundary value problems, which are more standard in the application of the finite element method.  Instead of working on the whole real line $\mathbb{R}$, simply restrict all considerations to an interval $[a,b]$ and impose boundary conditions at $x=a$ and $x=b$.  The symmetry group $G$ should now consist of all (or a subset of all) transformations that keep the differential equation and its boundary conditions invariant.  As the boundary conditions impose further constraints, the symmetry group of the boundary value problem will usually be smaller than the symmetry group of the differential equation itself, \cite{BA-2002}.  A slightly less restrictive assumption is to allow symmetry transformations of a given system of differential equations without boundary conditions to act as equivalence transformations preserving a class of boundary value problems containing the problem under consideration, \cite{BP-2012}.

\section{Moving Frames}\label{moving frame section}

The theoretical foundations of the discrete equivariant moving frame method have recently been developed in \cite{MM-2016,O-2001}. For the sake of completeness of the present exposition, we summarize the theory of moving frames relevant to the construction of symmetry-preserving finite element schemes here. 

After evaluating the discrete weak form \eqref{discrete weak form}, the result is a function of the discrete points $(x_{k-1},u_{k-1})$, $(x_k,u_k)$, and $(x_{k+1},u_{k+1})$.  In the following, we combine these three points into the \emph{second-order discrete jet} at $k \in \mathbb{Z}$:
\[
z^{[2]}_k = (k,x_{k-1},u_{k-1},x_k,u_k,x_{k+1},u_{k+1}).
\]
The terminology stems from the fact that $z^{[2]}_k$ contains sufficiently many points to approximate the function $u(x)$ and its derivatives $u_x$, $u_{xx}$ at the node $x_k$ using central differences.  We introduce the \emph{second-order discrete jet space} 
\[
\J^{[2]} = \bigcup_{k=-\infty}^\infty\, z_k^{[2]}, 
\]
which consists of the union of the second-order discrete jets over the integers $k \in \mathbb{Z}$.  The discrete jet space $\J^{[2]}$ admits the structure of a \emph{lattice variety} or \emph{lattifold}, which is a manifold-like object modeled on $\mathbb{Z}$ rather than $\mathbb{R}$, \cite{MM-2016}.  Alternatively, $\J^{[2]}$ is a disconnected manifold with fibers isomorphic to the Euclidean space $\mathbb{R}^6$.  In the following, we let $\pi\colon \J^{[2]} \to \mathbb{Z}$ denote the projection onto the discrete index $k$:
\[
\pi(z^{[2]}_k) = k.
\]

Now, let $G$ be an $r$-parameter Lie group acting on the plane $\mathbb{R}^2=\{(x,u)\}$.  Extending the action trivially to $\mathbb{Z}$,
\[
g\cdot k = k,
\]
the Lie group $G$ induces an action on the discrete jet $z^{[2]}_k$ via the product action
\begin{equation}\label{product action}
\begin{aligned}
Z^{[2]}_k &= g\cdot z^{[2]}_k \\
(k,X_{k-1},U_{k-1},X_k,U_k,X_{k+1},U_{k+1})  &= 
(k,g\cdot x_{k-1},g\cdot u_{k-1}, g\cdot x_k, g\cdot u_k, g\cdot x_{k+1}, g\cdot u_{k+1}).
\end{aligned}
\end{equation}
See \cite{BV-2017} for further details. In other words, the Lie group $G$ induces an action on each fiber of $\J^{[2]}$ via the product action.  In the following, we assume that the action is \emph{(locally) free} and \emph{regular} on each fiber $\pi^{-1}(k)=\J^{[2]}|_k$.  This forces $\dim G \leqslant \dim \J^{[2]}|_k = 6$.  We recall that the product action is free at $z_k^{[2]}$ if the isotropy group
\[
G_{z_k^{[2]}} = \{ g \in G\;|\; g\cdot z_k^{[2]} = z_k^{[2]}\} = \{e\}
\]
is trivial, and that the action is locally free at $z_k^{[2]}$ if the isotropy group is discrete.  On the other hand, the action is regular if the group orbits have the same dimension and each point in $\J^{[2]}|_k$ has arbitrarily small neighborhoods whose intersection with each orbit is a connected subset thereof.

\begin{definition}
Let $G$ act (locally) freely and regularly on (each fiber of) $\J^{[2]}$ by the product action \eqref{product action}.  A \emph{discrete (right) moving frame} is a $G$-equivariant map $\rho\colon \J^{[2]} \to G$ satisfying 
\[
\rho(g\cdot z_k^{[2]}) = \rho(z_k^{[2]})\, g^{-1}
\]
for all $g \in G$ where the product action is defined.
\end{definition}

The construction of a discrete moving frame is based on the introduction of a (collection of) cross-section(s) $\mathcal{K} \subset \J^{[2]}$ to the group orbits.

\begin{definition}
A subset $\mathcal{K} \subset \J^{[2]}$ is a \emph{cross-section} to the group orbits if for each $k \in \mathbb{Z}$, the restriction $\mathcal{K}|_k \subset \J^{[2]} = \pi^{-1}(k)$ is a submanifold of $\J^{[2]}|_k$, transverse and of complementary dimension to the group orbits.
\end{definition}

In general, a cross-section $\mathcal{K} \subset J^{[2]}$ is specified by a system of $r=\dim G$ difference equations
%
\[
\mathcal{K} = \{ E_\ell(z_k^{[2]}) = 0\; |\; \ell = 1,\ldots,r\}.
\]
%
The right moving frame $\rho(z_k^{[2]})$ at $z_k^{[2]}$ is then the unique group element in $G$ that sends $z_k^{[2]}$ onto $\mathcal{K}|_k$:
\[
\rho(z_k^{[2]})\cdot z_k^{[2]} \in \mathcal{K}|_k.
\]
The coordinate expressions of the moving frame are obtained by solving the \emph{normalization equations}
\[
E_\ell(g\cdot z_k^{[2]}) = 0,\qquad \ell = 1,\ldots,r,
\]
for the group parameters $g=(g_1,\ldots,g_r)$.

Given a moving frame, there is a systematic procedure for constructing invariant functions, invariant differential forms, and other invariant quantities, \cite{FO-1999}.  

\begin{definition}
Let $\rho\colon\J^{[2]} \to G$ be a right moving frame.  The \emph{invariantization} of the difference function $F(k,x_i,u_i,\ldots,x_j,u_j)$ is the invariant
\begin{equation}\label{invariantization}
\iota_k(F)(k,x_i,u_i,\ldots,x_j,u_j) = F(k,\rho_k\cdot x_i,\rho_k\cdot u_i,\ldots,\rho_k\cdot x_j,\rho_k\cdot u_j)
\end{equation}
obtained by acting on the arguments of $F$ with the moving frame $\rho_k=\rho(z_k^{[2]})$. 
\end{definition}

Borrowing the notation from \cite{M-2010}, we can rewrite \eqref{invariantization} as
\[
\iota_k(F)(k,x_i,u_i,\ldots,x_j,u_j) = F(k,g\cdot x_i,g\cdot u_i,\ldots g\cdot x_j, g\cdot u_j)\big|_{g=\rho_k}.
\]
Thus, the invariantization of $F$ is obtained by first acting on its argument by an arbitrary group element $g \in G$, followed by the substitution $g=\rho_k$.  In particular, the invariantization of the components of a point $(x_\ell,u_\ell)$ are the invariants
\[
\iota_k(x_\ell) = g\cdot x_\ell\big|_{g=\rho_k},\qquad \iota_k(u_\ell) = g\cdot u_\ell\big|_{g=\rho_k}.
\]
Similarly, we can also invariantize the hat function $\phi_\ell(x)$ and its derivative:
\[
\iota_k(\phi_\ell) = (g\cdot \phi_\ell)\big|_{g=\rho_k},\qquad 
\iota_k(\phi_\ell^\prime) = (g\cdot \phi_\ell^\prime)\big|_{g=\rho_k}.
\]
Therefore, the invariantization of $u^\md(x)$ and $u^\md_x$ is
\[
\iota_k(u^\md) = \sum_{\ell = -\infty}^\infty \iota_k(u_\ell)\iota_k(\phi_\ell),\qquad
\iota_k(u^\md_x) = \sum_{\ell = -\infty}^\infty \iota_k(u_\ell)\iota_k(\phi^\prime_\ell).
\]
Finally, according to \eqref{dx transformation}, the invariantization of the one-form $\mathrm{d}x$ is the invariant one-form
\[
\iota_k(\mathrm{d}x) = (\mathrm{D}_x X)\big|_{g=\rho_k}\, \mathrm{d}x.
\]
The one-form $\iota_k(\mathrm{d}x)$ is invariant since we limit our considerations to projectable group actions.  For general group actions, $\iota_k(\mathrm{d}x)$ would be contact-invariant\footnote{A differential form $\Omega$ on the jet space $\J^{(n)}$ is said to be \emph{contact-invariant} if and only if, for every $g \in G$, $g^*\Omega = \Omega + \theta_g$ for some contact form $\theta_g$, \cite{O-1995}.}.

\begin{example}\label{moving frame example}
As an example of the moving frame construction introduced above, let us consider the group action \eqref{symmetry nonlinear ODE}.  Introducing the centered difference derivative
\[
u_x^k = \frac{u_{k+1}-u_{k-1}}{x_{k+1}-x_{k-1}},
\]
a cross-section on $\J^{[2]}$ is given by 
\[
\mathcal{K} = \{x_k = 0,\; u_k = 1, u_x^k = 0 \}.
\] 
Solving the normalization equations
\[
0=X_k = \frac{\alpha x_k + \beta}{\gamma x_k + \delta},\qquad
1=U_k = \frac{u_k}{\gamma x_k + \delta},\qquad
0=U_X^k = \gamma(\overline{x}_k u_x^k - \overline{u}_k)+\delta u_x^k,
\]
for the group parameters, where 
\[
\overline{x}_k = \frac{x_{k+1}+x_{k-1}}{2},\qquad
\overline{u}_k = \frac{u_{k+1}+u_{k-1}}{2},
\]
we obtain the discrete moving frame
\begin{equation}\label{moving frame ex}
\alpha = \frac{1}{u_k},\qquad 
\beta = -\frac{x_k}{u_k},\qquad 
\gamma = \frac{u_k u_x^k}{(x_k-\overline{x}_k)u_x^k + \overline{u}_k},\qquad
\delta = \frac{u_k[\overline{u}_k - \overline{x}_k u_x^k]}{(x_k - \overline{x}_k)u_x^k + \overline{u}_k}.
\end{equation}
Invariantizing $u_\ell$, we obtain the invariant
\begin{equation}\label{u invariantization}
\iota_k(u_\ell) = \frac{u_\ell}{\gamma x_\ell + \delta}\bigg|_{\eqref{moving frame ex}} = \frac{u_\ell[(x_k - \overline{x}_k)u_x^k + \overline{u}_k]}{u_k[(x_\ell - \overline{x}_k)u_x^k + \overline{u}_k]},
\end{equation}
while the invariantization of the hat function $\phi_\ell(x)$ is
\begin{equation}\label{phi invariantization}
\iota_k(\phi_\ell(x)) = \phi_\ell \cdot \frac{\gamma x_\ell + \delta}{\gamma x +\delta}\bigg|_{\eqref{moving frame ex}} = \phi_\ell \cdot \frac{(x_\ell-\overline{x}_k)u_x^k + \overline{u}_k}{(x-\overline{x}_k)u_x^k + \overline{u}_k}.
\end{equation}
Combining \eqref{u invariantization} and \eqref{phi invariantization}, we obtain the invariantization of $u^\md$:
\[
\iota_k(u^\md) =  \sum_{\ell=-\infty}^\infty \frac{u_\ell}{u_k}\cdot \frac{(x_k-\overline{x}_k)u_x^k + \overline{u}_k}{(x-\overline{x}_k)u_x^k + \overline{u}_k}\, \phi_\ell(x).
\]
Finally, the invariantization of the one-form $\mathrm{d}x$ is
\[
\iota_k(\mathrm{d}x) = \frac{\mathrm{d}x}{(\gamma x + \delta)^2}\bigg|_{\eqref{moving frame ex}} = \frac{[(x_k - \overline{x}_k)u_x^k + \overline{u}_k]^2}{u_k^2[(x-\overline{x}_k)u_x^k + \overline{u}_k]^2}\, \mathrm{d}x.
\]
\end{example}

\section{Symmetry-Preserving Finite Element Schemes}\label{symmetry-preserving FEM section}

Given a second-order ordinary differential equation of the form \eqref{ODE} with projectable symmetry group~$G$, we now have everything in hand to construct a symmetry-preserving finite element scheme.  First, rewrite the differential equation in its weak form \eqref{ODE weak form}.  Then, consider the discrete approximation \eqref{discrete weak form} or any other suitable approximation.  In general, the discrete weak form will not preserve all the symmetries of the differential equation.  To obtain a symmetry-preserving finite element scheme, first construct a discrete moving frame for the symmetry group $G$ as explained in Section \ref{moving frame section}.  Then use the corresponding invariantization map to invariantize the discrete weak form \eqref{discrete weak form} or any suitable approximation.

To guarantee the consistency of the symmetry-preserving finite element scheme, we need to impose certain constraints on the general moving frame constructions introduced in Section \ref{moving frame section}.  Namely, in the continuous limit where the lengths of the elements $[x_{k-1},x_k]$ and $[x_k,x_{k+1}]$ go to zero, all discrete constructions need to converge to their continuous counterparts.  To guarantee this convergence, we have to construct a consistent moving frame compatible with a differential moving frame, \cite{O-2001}.  In other words, the discrete moving frame should, in the continuous limit, converge to a moving frames defined for the prolonged action of $G$ on the submanifold jet $\J^{(2)}=\{(x,u,u_x,u_{xx})\}$, \cite{O-1993}.  This will be the case if the cross-section $\mathcal{K}$ used to define the discrete moving frame converges, in the continuous limit, to a cross-section in $\J^{(2)}$.  In practice, this can be accomplished by using solely, for example, the approximations
\begin{equation}\label{derivative approximations}
x_k,\qquad u_k,\qquad u_x^k = \frac{u_{k+1}-u_{k-1}}{x_{k+1}-x_{k-1}},\qquad
u_{xx}^k = \frac{2}{x_{k+1}-x_{k-1}}(u_x^k - u_x^{k-1})
\end{equation}
to define a discrete cross-section as in the continuous limit those quantities converge to $x$, $u$, $u_x$, and $u_{xx}$, respectively.

\subsection{Ordinary Differential Equations}\label{ODE section}

In this section we consider several ordinary differential equations to illustrate the construction of symmetry-preserving finite element discretizations. 

We note that all the schemes presented below are implicit and hence require the solution of a (nonlinear) algebraic equation. For this purpose, we used Newton's method with a termination tolerance of $10^{-15}$ in all numerical examples.

\subsubsection{Equation $u_{xx}=\exp(-u_x)$}

As our first example, we consider the equation
\begin{equation}\label{exponential ODE}
u_{xx} = \exp(-u_x).
\end{equation}
This equation admits the three-parameter symmetry group action
\begin{equation}\label{symmetry group exponential ODE}
X= e^\epsilon x + a,\qquad U = e^\epsilon u + \epsilon e^\epsilon x + b,\qquad \epsilon, a, b \in \mathbb{R}.
\end{equation}
A weak form formulation of equation \eqref{exponential ODE} is given by
\begin{equation}\label{weak form exponential ODE}
0 = \int_{-\infty}^\infty \big(u_x \phi_x + e^{-u_x}\phi)\, \mathrm{d}x.
\end{equation}
An approximation of \eqref{weak form exponential ODE} is provided by
\begin{equation}\label{discrete weak form exponential ODE}
0 = \int_{-\infty}^\infty \big(u^\md_x \phi_k^\prime + e^{-u^\md_x}\phi_k)\, \mathrm{d}x.
\end{equation}
We now show that the discrete weak form \eqref{discrete weak form exponential ODE} is already invariant under the group action \eqref{symmetry group exponential ODE}.  First, we have
\[
U_X^\md = u_x^\md + \epsilon,\qquad 
\Phi_k = \phi_k,\qquad 
\Phi_k^\prime = \frac{1}{e^\epsilon}\phi_k^\prime,\qquad
\omega = e^\epsilon\, \mathrm{d}x.
\]
Therefore
\begin{align*}
0 &= \int_{-\infty}^\infty [U_X^\md \Phi_k^\prime + e^{-U_X^\md}\Phi_k]\, \omega
= \int_{-\infty}^\infty \bigg[(u_x^\md + \epsilon)\frac{\phi_k^\prime}{e^\epsilon} + e^{-(u_x^\md + \epsilon)}\phi_k\bigg] e^\epsilon \mathrm{d}x\\
&= \int_{-\infty}^\infty [ u_x^\md \phi_k^\prime + e^{-u_x^\md}\phi_k]\, \mathrm{d}x + \epsilon\int_{-\infty}^\infty \phi_k^\prime\, \md x = \int_{-\infty}^\infty [ u_x^\md \phi_k^\prime + e^{-u_x^\md}\phi_k]\, \mathrm{d}x,
\end{align*}
since $\displaystyle \int_{-\infty}^\infty \phi_k^\prime\, \md x = 0$.  Evaluating the integral \eqref{discrete weak form exponential ODE} we obtain the symmetry-preserving finite element scheme
\begin{equation}\label{exponential ODE scheme}
(\Delta x_k+\Delta x_{k-1})\,u_{xx}^k = \Delta x_{k-1} \exp[-u_x^{k-1}] + \Delta x_k \exp[-u_x^k],
\end{equation}
where 
\[
\Delta x_k = x_{k+1}-x_k,\qquad 
u_x^k = \frac{u_{k+1}-u_k}{x_{k+1}-x_k},\qquad
u_{xx}^k = \frac{2}{x_{k+1}-x_{k-1}}[u_x^k - u_x^{k-1}].
\]
We observe that the finite element scheme \eqref{exponential ODE scheme} differs from the two schemes appearing in \cite{DKW-2000} (equations (4.25) and (4.26)).

We now test the invariant scheme \eqref{exponential ODE scheme} numerically,  by treating equation \eqref{exponential ODE} as an initial value problem. First, we note that the exact solution to the equation \eqref{exponential ODE} is
\[
u_{\rm a}(x)=(x+c_1)\ln(x+c_1)-x+c_2,
\]
where $c_1$ and $c_2$ are two arbitrary constants.  Using the initial conditions $u(0)=1$ and $u_x(0)=0$, the exact solution becomes $u_{\rm a}(x)=(x+1)\ln(x+1)-x+1$. Integrating \eqref{exponential ODE scheme}  from $x=0$ to $x=1$, the convergence plot of the relative $l_\infty$-error is shown in Figure \ref{figure convergence plot exponential}. As it can be seen, the scheme converges at first order, in accordance with the derivation of the finite element scheme,  which is based on a first order linear interpolant.

\begin{figure}[!ht]
\centering
\includegraphics[scale=0.6]{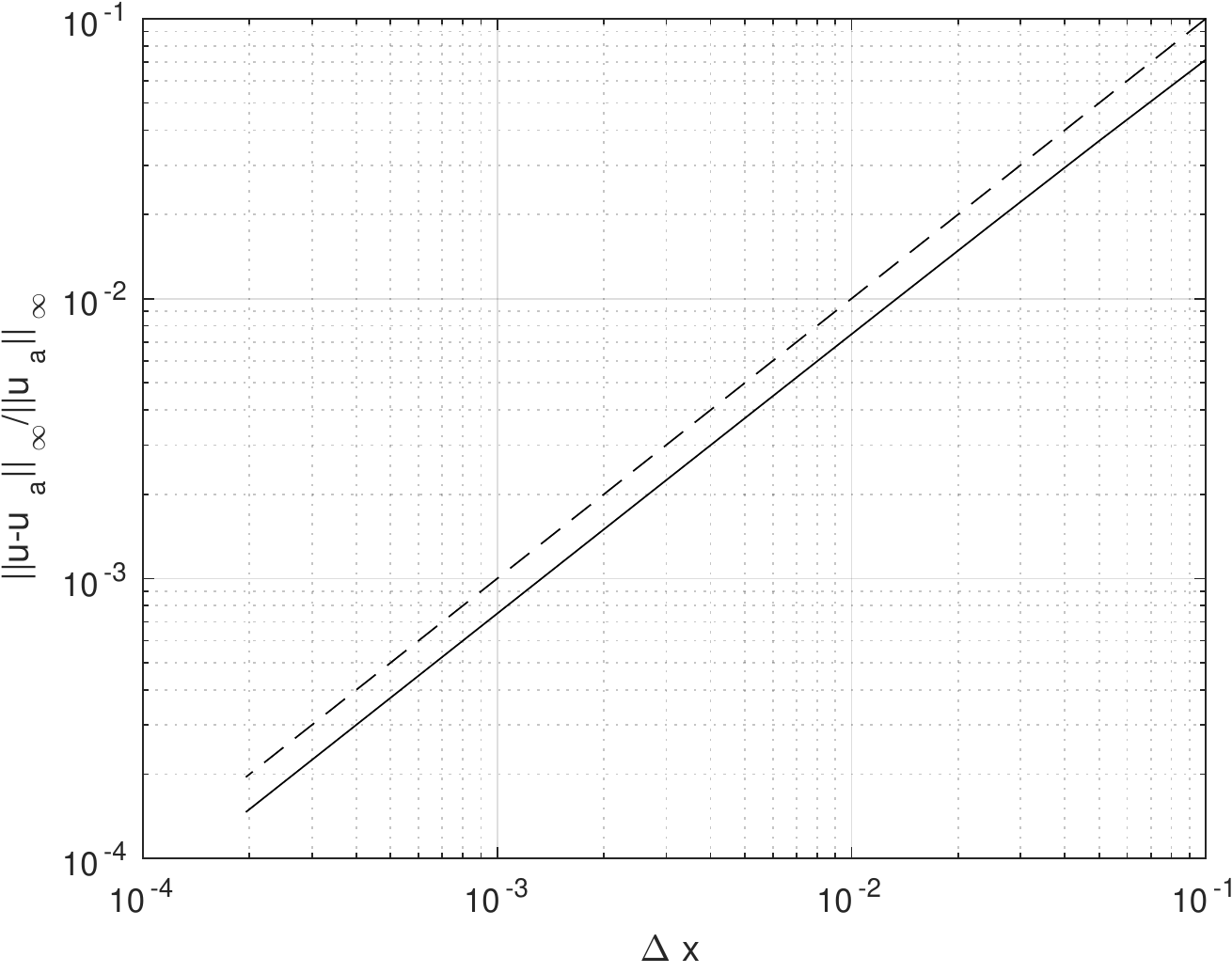}
\caption{\footnotesize{Convergence plot for the invariant numerical scheme~\eqref{exponential ODE scheme}.\textit{ Solid line:} relative $l_\infty$-error over the integration interval $[0,1]$ with initial conditions $u(0)=1$ and $u_x(0)=0$. \textit{Dashed line:} line of slope 1.}}
\label{figure convergence plot exponential}
\end{figure}

\subsubsection{Equation $u_{xx}+p(x)u_x+q(x)u=f(x)$}

In Example \ref{non-invariance linear ODE example}, we observed that the discrete weak formulation \eqref{linear ODE discrete weak form} does not preserve the linear superposition principle \eqref{linear symmetry} for the linear equation \eqref{linear ODE}.  To solve this problem, we now construct a symmetry-preserving finite element scheme.

The first step is to construct a moving frame.  As in \eqref{derivative approximations}, let
\[
\alpha_x^k = \frac{\alpha_{k+1}-\alpha_{k-1}}{x_{k+1}-x_{k-1}},\qquad 
\gamma_x^k = \frac{\gamma_{k+1} - \gamma_{k-1}}{x_{k+1}-x_{k-1}}
\]
denote the centered first derivative approximations.  In the following, we assume that
\[
\gamma_k = \gamma(x_k) \neq 0,\qquad \gamma_k\alpha_x^k - \alpha_k \gamma_x^k\neq 0.
\]
The second constraint is a discrete approximation of the Wronskian condition requiring that the solutions $\alpha(x)$ and $\gamma(x)$ are linearly independent.  We construct a moving frame by choosing the cross-section
\[
\mathcal{K} = \{u_k = u_x^k = 0\},
\]
where $u_x^k$ is the centered approximation introduced in \eqref{derivative approximations}.
Solving the normalization equations
\[
0 = U_k =  u_k + \epsilon_1\alpha_k + \epsilon_2\gamma_k,\qquad
0 = U^k_X = u_x^k + \epsilon_1 \alpha_x^k + \epsilon_2 \gamma^k_x,
\]
for the group parameters $\epsilon_1$ and $\epsilon_2$, we obtain
\begin{equation}\label{linear ODE moving frame}
\epsilon_1 = \frac{u_k \gamma_x^k - \gamma_k u_x^k}{\gamma_k \alpha_x^k - \alpha_k \gamma_x^k},\qquad
\epsilon_2 = \frac{\alpha_k u_x^k - u_k \alpha_x^k}{\gamma_k \alpha_x^k - \alpha_k \gamma_x^k}.
\end{equation}
Given the moving frame \eqref{linear ODE moving frame}, we invariantize the non-invariant discrete weak form \eqref{linear ODE discrete weak form}.  This is done by substituting the group normalizations \eqref{linear ODE moving frame} into \eqref{non-invariance}.  The result is the invariant discrete weak form
\begin{equation}\label{invariant linear ODE weak form}
\begin{aligned}
0 &= \int_{-\infty}^\infty [-u_x^\md \phi_k^\prime + (p(x) u_x^\md + q(x)u^\md - f(x))\phi_k]\, \mathrm{d}x \\
&\hskip 1cm + \frac{u_k \gamma_x^k - \gamma_k u_x^k}{\gamma_k \alpha_x^k - \alpha_k \gamma_x^k} \int_{-\infty}^\infty [-\alpha_x^\md \phi_k^\prime + (p(x) \alpha_x^\md + q(x)\alpha^\md)\phi_k]\,\mathrm{d}x \\
&\hskip 1cm + \frac{\alpha_k u_x^k - u_k \alpha_x^k}{\gamma_k \alpha_x^k - \alpha_k \gamma_x^k} \int_{-\infty}^\infty [-\gamma_x^\md \phi_k^\prime + (p(x) \gamma_x^\md + q(x)\gamma^\md)\phi_k]\, \mathrm{d}x.
\end{aligned}
\end{equation}

For second-order linear homogeneous equations, i.e.\ when $f(x)=0$ in \eqref{linear ODE}, we notice that
\begin{equation}\label{linear ODE y^h}
u^\md(x) = c_1\, \alpha^\md(x) + c_2\, \gamma^\md(x),
\end{equation}
where $c_1$ and $c_2$ are two arbitrary constants, is an exact solution of the discrete weak form \eqref{invariant linear ODE weak form}.  Indeed, when $u^\md(x)$ is given by \eqref{linear ODE y^h}, we have that
\[
\frac{u_k \gamma_x^k - \gamma_k u_x^k}{\gamma_k \alpha_x^k - \alpha_k \gamma_x^k} =  -c_1,\qquad \frac{\alpha_k u_x^k - u_k \alpha_x^k}{\gamma_k \alpha_x^k - \alpha_k \gamma_x^k} = -c_2,
\]
and the right-hand side of \eqref{invariant linear ODE weak form} is identically zero.

%
 %
%

\subsubsection{Equation $u_{xx}=u^{-3}$}

As a third example, we consider the nonlinear differential equation \eqref{nonlinear ODE}, with discrete weak form \eqref{discrete nonlinear ODE weak form}.  In Example \ref{moving frame example}, we computed a moving frame for the symmetry group \eqref{symmetry nonlinear ODE}.  The result is given in equation \eqref{moving frame ex}.  Invariantizing the discrete weak form \eqref{discrete nonlinear ODE weak form}, which is obtained by substituting the group parameter normalizations \eqref{moving frame ex} into the transformed discrete weak form \eqref{non-invariance discrete weak form}, we get the symmetry-preserving discrete weak form
\[
0 = \int_{-\infty}^\infty \bigg[\sum_{\ell=-\infty}^\infty \bigg(u_\ell \phi_\ell^\prime\phi_k^\prime + \frac{1}{u_\ell^3}\bigg(\frac{(x_\ell - \overline{x}_k)u_x^k + \overline{u}_k}{(x-\overline{x}_k)u_x^k + \overline{u}_k}\bigg)^4\phi_\ell \phi_k\bigg)\bigg]\, \mathrm{d}x.
\]
Integrating this expression yields the symmetry-preserving finite element scheme
\begin{multline}\label{cubic invariant finite element scheme}
-\bigg(\frac{u_{k+1}-u_k}{x_{k+1}-x_k}\bigg) + \bigg(\frac{u_k-u_{k-1}}{x_k - x_{k-1}}\bigg) + 
\frac{(x_k - x_{k-1})[(x_{k-1}-\overline{x}_k)u_x^k + \overline{u}_k]^2}{6u_{k-1}^3[(x_k-\overline{x}_k)u_x^k + \overline{u}_k]^2} \\
+\frac{(x_{k+1}-x_{k-1})[(x_k-\overline{x}_k)u_x^k + \overline{u}_k]^2}{3u_k^3[(x_{k+1}-\overline{x}_k)u_x^k + \overline{u}_k][(x_{k-1}-\overline{x}_k)u_x^k + \overline{u}_k]}
+\frac{(x_{k+1}-x_k)[(x_{k+1}-\overline{x}_k)u_x^k + \overline{u}_k]^2}{6u_{k+1}^3[(x_k-\overline{x}_k)u_x^k + \overline{u}_k]^2}=0.
\end{multline}

We now turn to the numerical verification of the resulting invariant scheme. First, we note that the general solution to the differential equation \eqref{nonlinear ODE} is
\[
u_{\rm a}^2(x)=\frac{1}{c_1}+c_1(x+c_2)^2,
\]
where $c_1$ and $c_2$ are arbitrary constants with $c_1\ne0$, \cite{P-1950}.

We integrate equation \eqref{nonlinear ODE} on the interval $[0,1]$ using the invariant finite element scheme \eqref{cubic invariant finite element scheme} and the initial conditions $u(0)=1$, $u_x(0)=0$. In this case the exact solution reduces to $u_{\rm a}(x)=\sqrt{1+x^2}$.   The convergence plot for the scheme~\eqref{cubic invariant finite element scheme} is presented in Figure~\ref{figure convergence plot}. As expected, this invariant scheme converges at first order, since it is based on a linear interpolant.

\begin{figure}[!ht]
\centering
\includegraphics[scale=0.6]{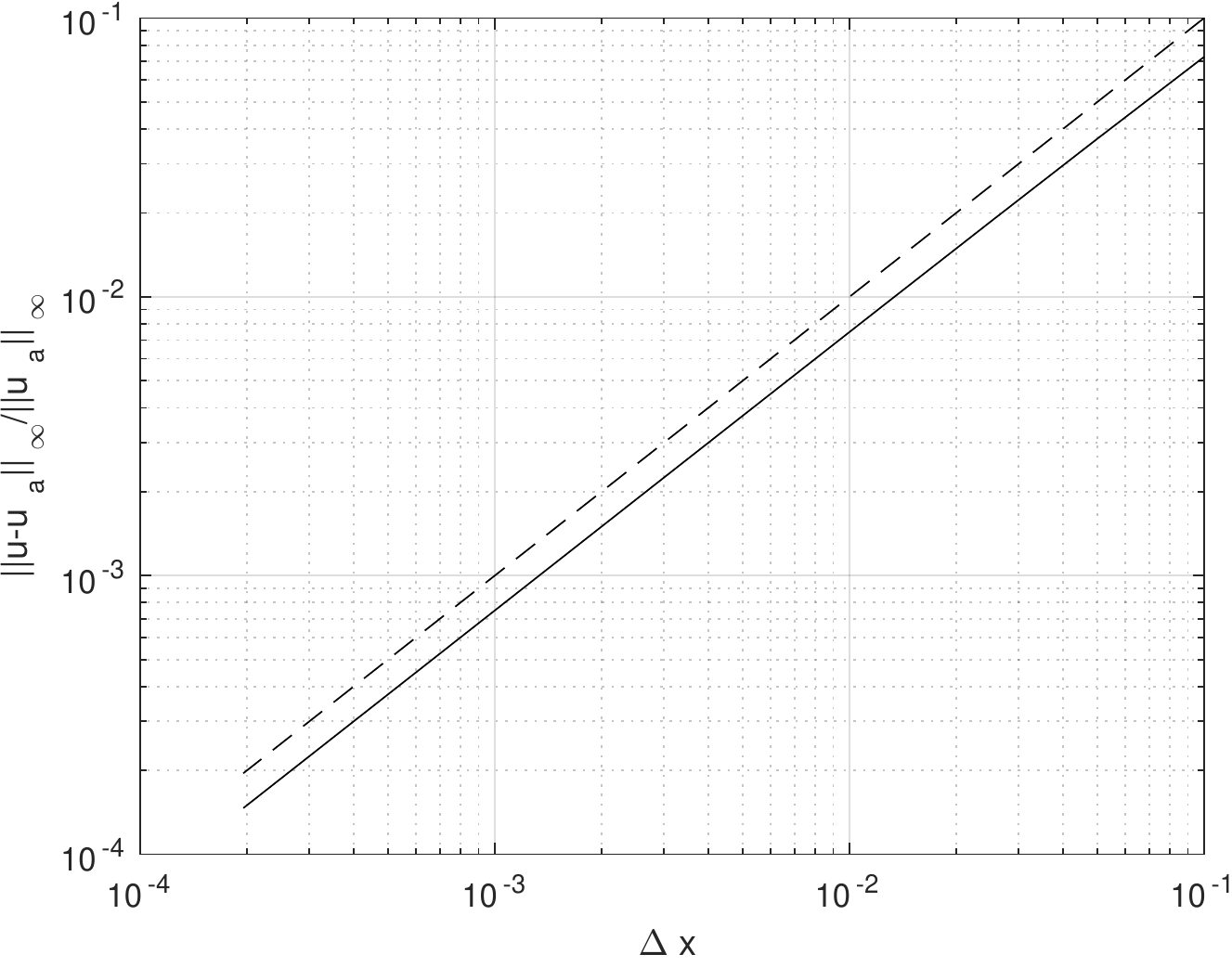}
\caption{\footnotesize{Convergence plot for the invariant numerical scheme~\eqref{cubic invariant finite element scheme}.\textit{ Solid line:} relative $l_\infty$-error over the integration interval $[0,1]$ with initial conditions $u(0)=1$ and $u_x(0)=0$. \textit{Dashed line:} line of slope 1.}}
\label{figure convergence plot}
\end{figure}

\begin{remark}
As an alternative discretization of the weak form \eqref{nonlinear ode weak form}, one could also consider the approximation
%
\[
\int_{-\infty}^\infty [u^\md_x\phi_k^\prime + (u^\md)^{-3}\phi_k]\, \mathrm{d}x = 0.
\]
%
Under the group action \eqref{symmetry nonlinear ODE}, this weak form gets dilated to
\[
0 = \int_{-\infty}^\infty[U_X^\md \Phi_k^\prime + (U^\md)^{-3}\Phi_k]\, \omega 
= (\gamma x_k + \delta) \int_{-\infty}^\infty [u_x^\md\phi_k^\prime + (u^\md)^{-3}\phi_k]\, \mathrm{d}x,
\]
and therefore preserves the symmetries of the original weak form \eqref{nonlinear ode weak form}.  The corresponding symmetry-preserving finite element scheme is
%
\[
\frac{u_{k+1}-u_k}{x_{k+1}-x_k} - \frac{u_k - u_{k-1}}{x_k - x_{k-1}} = \frac{x_{k+1}-x_k}{2u_k^2 u_{k+1}} + \frac{x_k-x_{k-1}}{2u_k^2 u_{k-1}}.
\]
\end{remark}

\subsubsection{Painlev\'e Equation $u_{xx} = u^{-1}u_x^2$}
As our final example we consider the Painlev\'e equation
\begin{equation}\label{painleve}
u_{xx} = \frac{u_x^2}{u}.
\end{equation}
This equation admits a six-parameter symmetry group of projectable transformations given by
\[
X = \frac{\alpha x + \beta}{\gamma x + \delta},\qquad
U = \big(u^{\lambda} e^{ax+b}\big)^{1/(\gamma x + \delta)},
\]
where $\alpha \delta - \beta \gamma = 1$, $a, b \in \mathbb{R}$, and $\lambda >0$.  In the following, we restrict our attention to the two-dimensional symmetry group
\begin{equation}\label{2d painleve symmetry group}
X=x,\qquad U = u e^{ax+b},
\end{equation}

A weak formulation of the Painlev\'e equation \eqref{painleve} is given by
\[
0 = \int_{-\infty}^\infty \bigg[ u_x \phi_x + \frac{u_x^2}{u}\phi\bigg] \md x.
\]
In the discrete setting, we approximate the weak form by
\begin{equation}\label{painleve discrete weak form}
0=\int_{-\infty}^\infty \bigg[ u_x^\md \phi_k^\prime + \frac{(u_x^\md)^2}{u^\md}\phi_k\bigg]\md x.
\end{equation}
Integrating \eqref{painleve discrete weak form}, we obtain the non-invariant finite element scheme
\begin{equation}\label{painleve non-invariant finite element scheme}
-2\bigg[\bigg(\frac{u_{k+1}-u_k}{x_{k+1}-x_k}\bigg) - \bigg(\frac{u_k - u_{k-1}}{x_k-x_{k-1}}\bigg)\bigg] + \frac{u_{k-1}}{x_k - x_{k-1}}\ln\bigg(\frac{u_{k-1}}{u_k}\bigg) + \frac{u_{k+1}}{x_{k+1}-x_k}\ln\bigg(\frac{u_{k+1}}{u_k}\bigg) = 0.
\end{equation}
To construct a symmetry-preserving finite element scheme, we construct a moving frame to the group action \eqref{2d painleve symmetry group} using the cross-section
\[
\mathcal{K} = \bigg\{ u_k = 1,\; u_x^k = \frac{u_{k+1}-u_{k-1}}{x_{k+1}-x_{k-1}} = 0\bigg\}.
\]
Solving the corresponding normalization equations, we obtain
\[
a = \frac{1}{x_{k+1} - x_{k-1}}\ln\bigg(\frac{u_{k-1}}{u_{k+1}}\bigg),\qquad
b = \frac{x_k}{x_{k+1}-x_{k-1}} \ln\bigg(\frac{u_{k+1}}{u_{k-1}}\bigg) - \ln u_k.
\]
Invariantizing the discrete weak form \eqref{painleve discrete weak form} and performing the integration we obtain the symmetry-preserving finite element scheme
\begin{subequations}\label{painleve invariant scheme}
\begin{equation}
-2\bigg[\bigg(\frac{I_k-1}{x_{k+1}-x_k}\bigg)-\bigg(\frac{1-J_k}{x_k-x_{k-1}}\bigg)\bigg]+ \frac{J_k\ln J_k}{x_k-x_{k-1}}+\frac{I_k\ln I_k}{x_{k+1}-x_k} = 0,
\end{equation}
where the invariants $I_k$ and $J_k$ are given by
\begin{equation}
I_k = \frac{u_{k+1}}{u_k}\exp\bigg[-\frac{x_{k+1}-x_k}{x_{k+1}-x_{k-1}} \ln\bigg(\frac{u_{k+1}}{u_{k-1}}\bigg)\bigg],\qquad
J_k = \frac{u_{k-1}}{u_k} \exp\bigg[\frac{x_k-x_{k-1}}{x_{k+1}-x_{k-1}}\ln\bigg(\frac{u_{k+1}}{u_{k-1}}\bigg)\bigg].
\end{equation}
\end{subequations}

We now compare the invariant scheme \eqref{painleve invariant scheme} against the non-invariant scheme \eqref{painleve non-invariant finite element scheme} numerically. Since the symmetry-preserving scheme is exact, i.e.\ the only difference between the numerical solution and the exact solution is due to round-off error, we do not need to verify the convergence of the scheme. A straightforward Taylor series analysis reveals that the non-invariant finite element scheme for the Painlev\'e equation is of second order.

We now solve the initial value problem for the Painlev\'e equation with initial conditions $u(0)=1$ and $u_x(0)=1$, corresponding to the exact solution $u=\exp(x)$.  Integrating over the interval $[0,1]$ using a step size of $\Delta x=0.01$, the time series of the relative error between the numerical solutions of the two schemes \eqref{painleve non-invariant finite element scheme}, \eqref{painleve invariant scheme} and the exact solution is depicted in Figure~\ref{figure error plot Painleve}.  It is obvious that the symmetry-preserving scheme outperforms the non-invariant scheme, with the error of the invariant scheme being several magnitudes smaller and approximately of the size of machine epsilon.

\begin{figure}[!ht]
\centering
\includegraphics[scale=0.6]{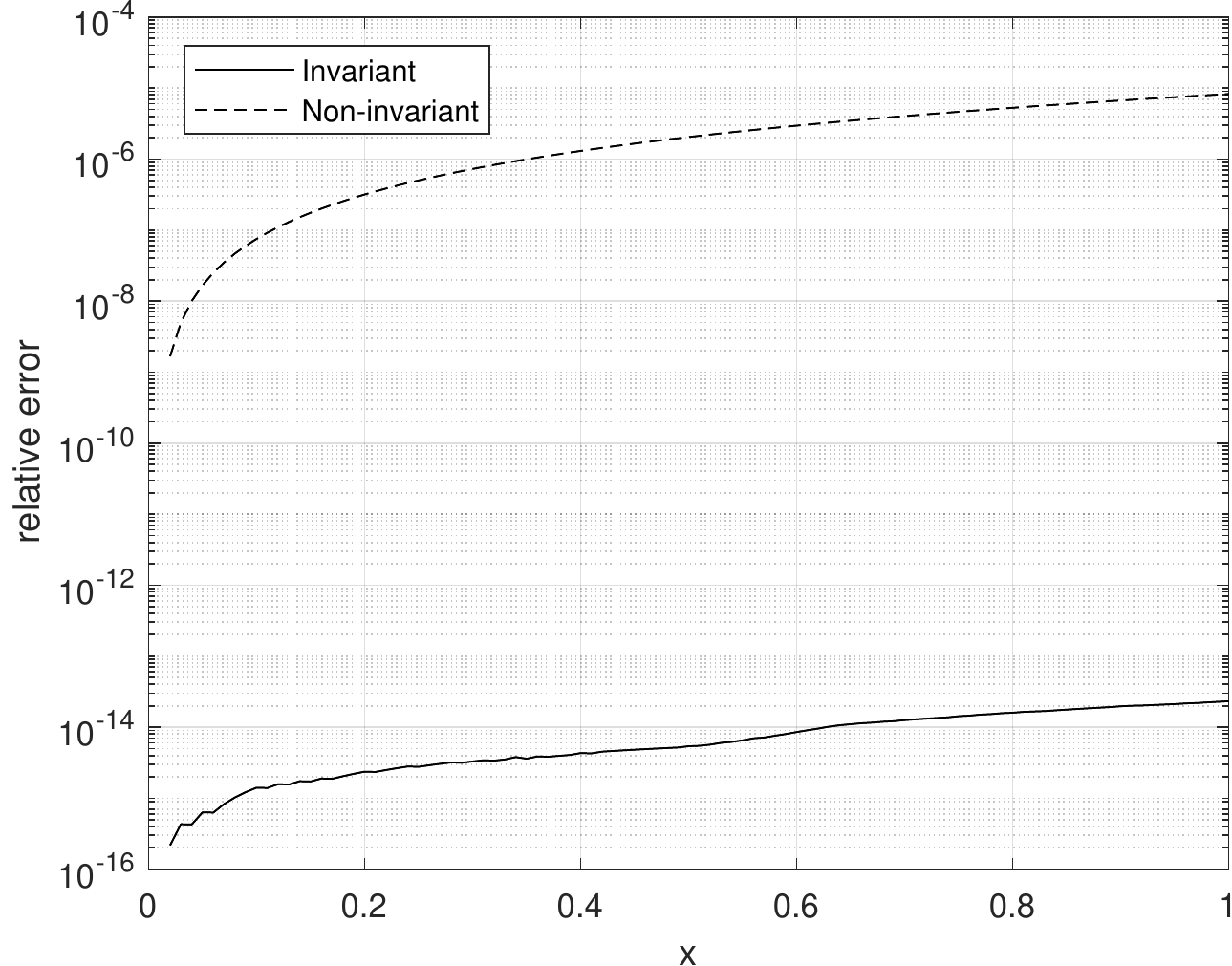}
\caption{\footnotesize{Time series of relative error for the invariant finite element scheme (solid line) and the non-invariant finite element scheme (dashed line). The initial conditions were $u(0)=1$ and $u_x(0)=1$, and we integrated the Painlev\'e equation up to $x=1$, interpreted as initial value problem, using a step size of $\Delta x=0.01$.}}
\label{figure error plot Painleve}
\end{figure}

\begin{remark}
We note that numerically solving the nonlinear algebraic equation \eqref{painleve invariant scheme} for the invariant finite element method is challenging due to the fact that this scheme is exact. Numerically, we observe an accumulation of round-off errors that is growing over the integration interval. The smaller the step size $\Delta x$, the more round-off error can accumulate. To numerically preserve the exactness of the scheme for all step sizes $\Delta x$, variable precision arithmetic may be necessary.
\end{remark}

\subsection{Partial Differential Equations}\label{PDE section}

In this section we extend the constructions introduced in the previous sections to the semi-discretization of (1+1)-dimensional evolution equations, where only the spatial variable is discretized.  This allows us to use many of the ideas introduced in the previous sections. To simplify the exposition, we focus on a particular example and consider Burgers' equation
\begin{equation}\label{Burgers}
u_t + u u_x = \nu u_{xx},\qquad \text{where}\qquad \nu >0,
\end{equation}
which plays an important role in various areas of applied mathematics, such as fluid mechanics, nonlinear acoustics, gas dynamics, and traffic flow. Here $\nu$ is the constant viscosity coefficient. Burgers' equation admits a five-parameter maximal Lie symmetry group, see e.g.~\cite{O-1993}.  One of these admitted symmetry transformations yields an inversion of time, which does not respect the requirement that the time variable $t$ should increase monotonically for a given initial value problem, \cite{BP-2012}. Thus, we restrict our attention to the four-parameter  subgroup of symmetry transformations
\begin{equation}\label{Burgers group action}
X= \lambda (x + v t) + a,\qquad T = \lambda^2 t + b,\qquad U = \frac{u + v}{\lambda},\qquad a,b,v\in \mathbb{R},\; \lambda \in \mathbb{R}^+.
\end{equation}

Multiplying Burgers' equation \eqref{Burgers} by a test function $\phi(x) \in C_c^\infty(\mathbb{R})$ and integrating over $\mathbb{R}$, we obtain the weak form 
\begin{equation}\label{Burgers weak form}
\int_{-\infty}^\infty u_t \phi\, \mathrm{d}x = \int_{-\infty}^\infty \bigg(-\nu u_x + \frac{u^2}{2}\bigg)\phi_x\, \mathrm{d}x.
\end{equation}

In the following, we consider the semi-discretization of Burgers' equation where the spatial variable $x$ is discretized and the time variable $t$ remains a continuous variable. In this setting, the interpolating coefficients in the approximation \eqref{ud} of the solution now become functions of $t$:
\begin{equation}\label{approximation}
u(x,t) \approx u^\md(x,t) = \sum_{k=-\infty}^\infty u_k(t) \phi_k(x).
\end{equation}
Substituting \eqref{approximation} into the weak form \eqref{Burgers weak form} and replacing the test function by the hat function $\phi_\ell$, we obtain
\begin{equation}\label{Burgers weak form approximation}
\int_{-\infty}^\infty u_t^\md \phi_\ell\, \mathrm{d}x = \int_{-\infty}^\infty \bigg(-\nu u_x^\md + \frac{(u^\md)^2}{2}\bigg)\phi^\prime_\ell\, \mathrm{d}x,
\end{equation}
where
\[
u_t^\md = \sum_{k=-\infty}^\infty \frac{\md u_k}{\md t} (t)\phi_k(x)\qquad\text{and}\qquad
u_x^\md = \sum_{k=-\infty}^\infty u_k(t) \phi_k^\prime(x).
\]
Under the group action \eqref{Burgers group action}, the differentials $\mathrm{d}x$ and $\mathrm{d}t$ transform according to
\begin{equation}\label{Burgers lifted forms}
\omega^x = \mathrm{D}_x(X)\, \mathrm{d}x + \mathrm{D}_t(X)\, \mathrm{d}t = \lambda(\mathrm{d}x + v\, \mathrm{d}t),\qquad
\omega^t = \mathrm{D}_x(T)\, \mathrm{d}x + \mathrm{D}_t(T)\, \mathrm{d}t = \lambda^2\, \mathrm{d}t,
\end{equation}
where $\mathrm{D}_x$ and $\mathrm{D}_t$ are the total derivative operators in the independent variables $x$ and $t$, respectively, \cite{FO-1999}.  Dual to the one-forms \eqref{Burgers lifted forms} are the implicit derivative operators
\[
\mathrm{D}_X = \frac{1}{\lambda} \mathrm{D}_x,\qquad \mathrm{D}_T = \frac{1}{\lambda^2}(\mathrm{D}_t - v\, \mathrm{D}_x).
\]
Therefore, the hat functions and their first derivatives transform according to 
\[
\Phi_\ell = \phi_\ell \qquad \text{and}\qquad \Phi^\prime_\ell = \mathrm{D}_X(\Phi_\ell) =  \frac{\phi_\ell^\prime}{\lambda}.
\]
Finally, we have
\begin{align*}
U^\md &= \sum_{k=-\infty}^\infty U_k(T) \Phi_k(X) = \sum_{k=-\infty}^\infty \frac{u_k+v}{\lambda}\, \phi_k = \frac{u^\md + v}{\lambda}, \\
U^\md_T &= \sum_{k=-\infty}^\infty \mathrm{D}_T(U_k)\Phi_k = \sum_{k=-\infty}^\infty \frac{1}{\lambda^3}\frac{\md u_k}{\md t} \phi_k = \frac{1}{\lambda^3}\, u_t^\md,\\
U^{\md}_X &= \sum_{k=-\infty}^\infty U_k\, \mathrm{D}_X(\Phi_k) = \sum_{k=-\infty}^\infty \frac{u_k+v}{\lambda^2}\, \phi_k^\prime = \frac{1}{\lambda^2}\, u_x^\md,
\end{align*}
where we used the fact that $\displaystyle \sum_{k=-\infty}^\infty \phi_k = 1$ and $\displaystyle \sum_{k=-\infty}^\infty \phi_k^\prime = 0$, where the sums are defined.  

We now act on the discrete weak form \eqref{Burgers weak form approximation} with the symmetry group \eqref{Burgers group action}.  Since the weak form is evaluated at a fixed time, we substitute
\[
\omega^x = \lambda(\mathrm{d}x + v\, \mathrm{d}t) \equiv \lambda\, \mathrm{d}x
\]
into the transformed weak form.  After simplification, we obtain
\begin{equation}\label{burgers transformed discrete weak form}
\int_{-\infty}^\infty u_t^\md\phi_\ell\, \md x = \int\bigg[-\nu u_x^\md + \frac{(u^\md + v)^2}{2}\bigg]\phi_\ell^\prime \, \md x 
= \int_{-\infty}^\infty \bigg[-\nu u_x^\md + \frac{(u^\md)^2}{2} + v u^\md\bigg]\phi_\ell^\prime\, \md x.
\end{equation}
Due to the occurrence of the Galilean boost parameter $v$, we conclude that the discrete weak form \eqref{Burgers weak form approximation} is not invariant under the symmetry subgroup \eqref{Burgers group action}.

\subsubsection{Symmetry-Preserving Lagrangian Scheme}

In this section we introduce a discrete weak form of Burgers' equation that preserves the symmetry subgroup \eqref{Burgers group action}.  This is done by using the \emph{Lagrangian form} of Burgers' equation given by
\begin{equation}\label{Burgers Lagrangian form}
\frac{\mathrm{d}u}{\mathrm{d}t} = \nu u_{xx},\qquad \frac{\mathrm{d}x}{\mathrm{d}t} = u,
\end{equation}
where
\[
\frac{\mathrm{d}}{\mathrm{d}t} = \mathrm{D}_t + \frac{\mathrm{d}x}{\mathrm{dt}}\mathrm{D}_x = \mathrm{D}_t + u\, \mathrm{D}_x.
\]
In this setting, $x$ is now a function of the time variable $t$.  Therefore, the nodes $x_\ell$ are functions of $t$ and the element $[x_{\ell},x_{\ell+1}]$ varies as a function of time.

Following the general procedure introduced in the previous sections, the first step in constructing a symmetry-preserving weak form consists of computing a discrete moving frame.  Assuming, for simplicity, that
\[
u_x^\ell = \frac{u_{\ell+1}-u_{\ell-1}}{x_{\ell+1}-x_{\ell-1}} > 0\qquad \text{for all}\qquad \ell \in \mathbb{Z}, 
\]
we introduce the cross-section
\[
\mathcal{K} = \{ x_\ell = t = u_\ell = 0,\, u_x^\ell = 1\}.
\]
Solving the normalization equations
\[
0 = \lambda (x_\ell + vt)+a,\qquad 0=\lambda^2t+b,\qquad 0 = \lambda^{-1}(u_\ell + v),\qquad 1=\lambda^{-2} u_x^\ell,
\]
for the group parameters, we obtain the moving frame
\begin{equation}\label{burgers moving frame}
a = -\sqrt{u_x^\ell} (x_\ell - t\, u_\ell),\qquad b = -t\, u_x^\ell,\qquad v = -u_\ell,\qquad \lambda = \sqrt{u_x^\ell}.
\end{equation}

Since $u_k$ and $x_k$ are functions of $t$, we now wish to invariantize $\md u_k/\md t$ and $\md x_k/\md t$.  Under the symmetry group action \eqref{Burgers group action}, we have
\begin{align*}
g\cdot \frac{\md u_k}{\md t} &= \frac{\md U_k}{\md T} = \frac{1}{\lambda^2} \frac{\md}{\md t} \bigg[\frac{u_k+v}{\lambda}\bigg] = \frac{1}{\lambda^3}\frac{\md u_k}{\md t},\\
g\cdot \frac{\md x_k}{\md t} &= \frac{\md X_k}{\md T} = \frac{1}{\lambda^2} \frac{\md}{\md t} [\lambda(x_k+vt)+a] = \frac{1}{\lambda}\bigg(\frac{\md x_k}{\md t} + v\bigg),
\end{align*}
where
\[
\frac{\md}{\md T} = \frac{1}{\lambda^2} \frac{\md }{\md t}
\]
is the derivative operator dual to the one-form $\omega^t = \lambda^2 \mathrm{d}t$.  Using the moving frame \eqref{burgers moving frame}, we have
\[
\iota_\ell\bigg(\cfrac{\mathrm{d}u_k}{\mathrm{d}t}\bigg) = \frac{1}{(u_x^\ell)^{3/2}} \frac{\mathrm{d}u_k}{\mathrm{d}t},\qquad 
\iota_\ell\bigg(\cfrac{\mathrm{d}x_k}{\mathrm{d}t}\bigg) = \frac{1}{\sqrt{u_x^\ell}}\bigg(\frac{\mathrm{d}x_k}{\mathrm{d}t} - u_\ell\bigg).
\]
Next, invariantizing the discrete weak form \eqref{Burgers weak form approximation}, which is obtained by substituting the group normalizations \eqref{burgers moving frame} into the transformed discrete weak form \eqref{burgers transformed discrete weak form}, we obtain the symmetry-preserving discrete weak form
\[
\int_{-\infty}^\infty u_t^\md \phi_\ell\, \md x = \int_{-\infty}^\infty \bigg(-\nu u_x^\md + \frac{(u^\md)^2}{2}-u_\ell\, u^\md\bigg)\phi_\ell^\prime\, \md x.
\]
Evaluating the integrals, and simplifying the expressions, we obtain the symmetry-preserving finite element scheme
\begin{subequations}\label{invariant scheme}
\begin{equation}\label{invariant scheme eq 1}
\frac{1}{3}\bigg[\frac{x_\ell-x_{\ell-1}}{x_{\ell+1}-x_{\ell-1}}\cdot \frac{\mathrm{d}u_{\ell-1}}{\mathrm{d}t} + 2 \cdot \frac{\mathrm{d}u_\ell}{\mathrm{d}t} + \frac{x_{\ell+1}-x_\ell}{x_{\ell+1}-x_{\ell-1}} \cdot \frac{\mathrm{d}u_{\ell+1}}{\mathrm{d}t} \bigg] = \nu u_{xx}^\ell - \bigg(\frac{u_{\ell+1}-2u_\ell+u_{\ell-1}}{3}\bigg)u_x^\ell,
\end{equation}
where
\[
u_{xx}^\ell = \frac{2}{x_{\ell+1}-x_{\ell-1}}\bigg[\bigg(\frac{u_{\ell+1}-u_\ell}{x_{\ell+1}-x_\ell}\bigg) - \bigg(\frac{u_\ell-u_{\ell-1}}{x_\ell-x_{\ell-1}}\bigg) \bigg].
\]
In the Lagrangian formalism, we need to supplement \eqref{invariant scheme eq 1} with a mesh equation that will describe how the node $x_\ell$ will evolve as a function of time.  This can be achieved, in a symmetry-preserving fashion, by setting $\iota_\ell\big(\mathrm{d}x_\ell/\mathrm{d}t\big)=0$, which yields the invariant differential equation
\begin{equation}\label{invariant scheme eq 2}
\frac{\mathrm{d}x_\ell}{\mathrm{d}t} = u_\ell.
\end{equation}
\end{subequations}
In the continuous limit, the invariant scheme \eqref{invariant scheme} converges to \eqref{Burgers Lagrangian form}.

\begin{remark}
In equation \eqref{invariant scheme eq 1} there is no built-in term that would allow to control the evolution of the mesh.  The limit only holds provided the mesh points satisfy equation \eqref{invariant scheme eq 2}.  This is to be expected as we have invariantized the discrete weak form \eqref{Burgers weak form approximation}, defined on a fixed mesh together with the mesh equation $\mathrm{d}x_\ell/\mathrm{d}t = 0$, which forces the nodes to stay fixed as the time variable evolves.  
\end{remark}

Due to the Lagrangian mesh equation \eqref{invariant scheme eq 2}, the invariant scheme \eqref{invariant scheme} will in general suffer from mesh tangling and singularities~\cite{HR-2011}.  To avoid these problems, we now construct a symmetry-preserving finite element scheme that will hold on any moving mesh.

\subsubsection{Symmetry-Preserving $r$-Adaptive Scheme}

In this section we construct a symmetry-preserving finite element scheme with a built-in term that takes into account the evolution of the mesh.  This is achieved by invariantizing
\begin{equation}\label{burgers moving mesh weak form}
\int_{-\infty}^\infty \big(u_t^\md - u^\md_x x_t\big)\phi_\ell\, \mathrm{d}x = 
\int_{-\infty}^\infty \bigg(-\nu u_x^\md + \frac{(u^\md)^2}{2}\bigg)\phi_\ell^\prime\, \mathrm{d}x,
\end{equation}
where the extra term on the left-hand side of equation \eqref{burgers moving mesh weak form} takes into account the movement of the mesh, and where
$$
x_t = \sum_{k=-\infty}^\infty \frac{\mathrm{d}x_k}{\mathrm{d}t}\phi_k.
$$
In particular, we note that when $\md x_k/\md t = 0$ for all $k$, then we recover the discrete weak form \eqref{Burgers weak form approximation} of Burgers's equation on a fixed mesh.  

Under the group action \eqref{Burgers group action},
\[
X_T = \sum_{k=-\infty}^\infty \frac{\md X_k}{\md T} \Phi_k = \sum_{k=-\infty}^\infty \frac{1}{\lambda}\bigg(\frac{\md x_k}{\md t} + v\bigg)\phi_k = \frac{x_t+v}{\lambda}.
\]
Therefore, acting by the symmetry group \eqref{Burgers group action} on the discrete weak form \eqref{burgers moving mesh weak form} we obtain the transformed weak form
\begin{equation}\label{transformed burgers moving mesh weak form}
\int_{-\infty}^\infty \big[u_t^\md - u_x^\md(x_t+v)\big]\phi_\ell\, \mathrm{d}x = \int_{-\infty}^\infty\bigg[-\nu u_x^\md + \frac{(u^\md)^2}{2} + v u^\md\bigg]\phi_\ell^\prime\, \mathrm{d}x.
\end{equation}
The invariantization of \eqref{burgers moving mesh weak form} is obtained by substituting the moving frame expressions \eqref{burgers moving frame} into \eqref{transformed burgers moving mesh weak form}, which yields the symmetry-preserving discrete weak form
\[
\int_{-\infty}^\infty \big[u_t^\md - u_x^\md(x_t-u_\ell)\big]\phi_\ell\, \mathrm{d}x = \int_{-\infty}^\infty\bigg[-\nu u_x^\md + \frac{(u^\md)^2}{2} - u_\ell\, u^\md\bigg]\phi_\ell^\prime\, \mathrm{d}x.
\]
%
Evaluating the integrals, we obtain the symmetry-preserving finite element scheme
\begin{multline*}
\frac{x_\ell-x_{\ell-1}}{3(x_{\ell+1}-x_{\ell-1})} \cdot \frac{\mathrm{d}u_{\ell-1}}{\mathrm{d}t}
+ \frac{2}{3} \cdot \frac{\mathrm{d}u_\ell}{\mathrm{d}t}
+ \frac{x_{\ell+1}-x_\ell}{3(x_{\ell+1}-x_{\ell-1})} \cdot \frac{\mathrm{d}u_{\ell+1}}{\mathrm{d}t}
-\frac{u_\ell-u_{\ell-1}}{3(x_{\ell+1}-x_{\ell-1})}\cdot \frac{\mathrm{d}x_{\ell-1}}{\mathrm{d}t} \\
- \frac{2}{3}u_x^\ell \cdot \frac{\mathrm{d}x_\ell}{\mathrm{d}t} 
- \frac{u_{\ell+1}-u_\ell}{3(x_{\ell+1}-x_{\ell-1})}\cdot \frac{\mathrm{d}x_{\ell+1}}{\mathrm{d}t} = \nu\, u_{xx}^\ell - \bigg(\frac{u_{\ell+1} + u_\ell+u_{\ell-1}}{3}\bigg)u_x^\ell.
\end{multline*}
The remaining ingredient is to prescribe $\mathrm{d}x_{\ell}/\mathrm{d}t$ using an invariant mesh equation.  For example, when using the mesh equation \eqref{invariant scheme eq 2}, the scheme reduces to 
\[
\frac{x_\ell-x_{\ell-1}}{3(x_{\ell+1}-x_{\ell-1})} \cdot \frac{\mathrm{d}u_{\ell-1}}{\mathrm{d}t}
+ \frac{2}{3} \cdot \frac{\mathrm{d}u_\ell}{\mathrm{d}t}
+ \frac{x_{\ell+1}-x_\ell}{3(x_{\ell+1}-x_{\ell-1})} \cdot \frac{\mathrm{d}u_{\ell+1}}{\mathrm{d}t} = \nu\, u_{xx}^\ell.
\]
Other invariant mesh equations for Burgers' equation were proposed e.g.\ in~\cite{BN-2014,BV-2017}, and will not be discussed further here.


\section{Conclusions and Outlook}\label{section conclusion}

In this paper we have, for the first time, laid out a partial theory for constructing symmetry-preserving finite element schemes. This contribution is timely given the large body of literature that exists nowadays regarding the construction of symmetry-preserving finite difference schemes, and due to the obvious importance that finite element discretizations play in mathematical sciences.
 
While we have primarily restricted our attention to second-order differential equations, the principles introduced in this paper are applicable to higher-order differential equations, boundary value problems (though this usually reduces the size of the admitted symmetry group) as well as to multi-dimensional systems of partial differential equations.  A main complication when tackling higher-order differential equations is the necessity to use higher-order basis functions. Conceptually, these higher-order basis functions can readily be included in the theory laid out in the present paper.  Since the resulting computations substantially grow in complexity, we have however abstained from including them here for the sake of clarity of this first exposition on invariant finite element methods.
 
Invariant discretization schemes are a particular class of geometric numerical integrators that are designed to preserve at the discrete level (a subgroup of) the maximal Lie symmetry group of a system of differential equations. The motivation for the development of geometric numerical integrators is that, in general, maintaining the intrinsic geometric properties of a system of differential equations improves the long-term behavior of a numerical integration scheme. In the case of symmetries, it has been shown that invariant integrators play an essential role for blow-up problems, where they have been shown to outperform standard non-invariant integrators. The preservation of symmetries in finite element schemes opens up the possibility to compare invariant finite element schemes against non-invariant finite element discretizations, which has been done for a single example in the present work.  We reserve a more detailed comparison for future work.

\subsection*{Acknowledgements}

This research was undertaken, in part, thanks to funding from the Canada Research Chairs program, the NSERC Discovery Grant program and the LeverageR{\&}D program of the Research and Development Corporation of Newfoundland and Labrador.  FV would like to thank Memorial University of Newfoundland, where this research was initiated, for the hospitality during his stay.
 

\end{document}